\journal{Journal of Multivariate Analysis}
\theoremstyle{plain}
\newtheorem{theorem}{Theorem}
\newtheorem{rem}{Remark}
\newtheorem{proposition}{Proposition}
\newtheorem{lemma}{Lemma}
\theoremstyle{definition}
\newtheorem{definition}{Definition}
\newtheorem{example}{Example}
\newtheorem{Assum}{Assumption}
\begin{document}

\begin{frontmatter}

\title{Distributed estimation of spiked eigenvalues in spiked population models}

\author[1]{Lu Yan }
\author[1]{Jiang Hu\corref{mycorrespondingauthor}}

\address[1]{KLASMOE and School of Mathematics $\&$ Statistics, Northeast Normal University, China}

\cortext[mycorrespondingauthor]{Corresponding author. Email addresses: \url{yanl129@nenu.edu.cn} (L. Yan), \url{huj156@nenu.edu.cn} (J. Hu)}

\begin{abstract}
The proliferation of science and technology has led to the prevalence of voluminous data sets that are distributed across multiple machines. It is an established fact that conventional statistical methodologies may be unfeasible in the analysis of such massive data sets due to prohibitively long computing durations, memory constraints, communication overheads, and confidentiality considerations.
In this paper, we propose distributed estimators of the spiked eigenvalues in spiked population models.  The consistency and asymptotic normality of the distributed estimators are derived, and the statistical error analysis of the distributed estimators is provided as well.   Compared to the estimation from the full sample, the proposed distributed estimation shares the same order of convergence. Simulation study and real data analysis indicate that the proposed distributed estimation and testing procedures have excellent properties in terms of estimation accuracy and stability as well as transmission efficiency.
\end{abstract}

\begin{keyword} 
Distributed estimation  \sep
Spiked population models \sep
Sample covariance matrices \sep
Compatibility \sep
Asymptotic normality
\MSC[2020] Primary 60B20
 \sep
Secondary 68W15
\end{keyword}

\end{frontmatter}

\section{Introduction\label{sec:1}}

The sample covariance matrix is one of the fundamental statistics in multivariate statistical analysis. Under the classical assumption, where the dimension is fixed and the sample size tends to infinity, the sample covariance matrix serves as a consistent estimator of the population covariance matrix. However, when the dimensionality of the sample approaches the same order of magnitude as the sample size, or even surpasses it, the accuracy of the estimates is not guaranteed. The theory of large-dimensional random matrices becomes crucial in the study of high-dimensional sample covariances. The examination of covariance eigenvalues based on random matrices can be traced back to the Marcenko-Pastur (M-P) law first proposed by \cite{marvcenko1967distribution}.

Specifically, let $ \bm{X} =\left(\bm{x}_{1}, \dots, \bm{x}_n\right)=(x_{ij})_{p\times n}$  and $\bm{S}_n=\frac{1}{n}\bm{V}\bm{X}\bm{X}^*\bm{V}^*$, where $\{x_{ij},1\leq i\leq p,1\leq j\leq n\}$ are  independent and identically distributed (i.i.d.) random variables with mean zero and variance one. The empirical spectral distribution (ESD) of the random matrix $\bm{S}_n$ is defined by
$$F_n(x)=p^{-1}\sum_{i=1}^{p}\delta(\lambda_i(\bm{S}_n)\le x),$$
where $\lambda_1(\bm{S}_n)\ge \cdots \ge\lambda_p (\bm{S}_n)  $  are the eigenvalues of $\bm{S}_n$, $\delta(\cdot)$ is an indicator function.   
It is proved that if  $\bm{V}=\bm{I}_p$ is the identity matrix,  as $n\to \infty$ and $p/n \to y\in (0,\infty)$, with probability one, the ESD of $\bm{S}_n$ converges weakly to the M-P law whose density function is given by 
\begin{equation*}
	\frac{d}{dx}F_y(x)=  
	\left\{
	\begin{array}{ll}
		\frac{1}{2\pi x y \sigma^2}\sqrt{(b-x)(x-a)},\quad & \text{if} \quad  a \le x \le b,\\
		0,\quad	&\text{otherwise},\\
	\end{array}
	\right.
\end{equation*}
and has a point mass $1-1/y$ at the origin if $y >1$, where $a=\sigma^2(1-\sqrt{y})^2$ and $b=\sigma^2(1+\sqrt{y})^2$. If the fourth moment of $x_{11}$ exists, then with probability one
$$\lim_{n\to \infty}\lambda_{1}(\bm{S}_n)=b~\mbox{and}~~ \lim_{n\to \infty}\lambda_{p}(\bm{S}_n)=a.$$
When $\bm{V}$ is the identity matrix but has only a few eigenvalues that are not equal to one, we refer to this as the spiked population model, which was coined by \cite{johnstone2001distribution}. Research on extreme eigenvalues initially emerged in \cite{geman1980limit}. They established that, subject to specific moment conditions, the dominant eigenvalue of the sample covariance matrix converges to $b$ when $p/n \to y \in (0,\infty)$. Furthermore, \cite{yin1988limit} extends the conclusion to encompass the condition of finite fourth-order moments. In \cite{bai2010spectral}, the central limit theorem (CLT) for these spiked eigenvalues is presented, and in \cite{bai2012estimation}, the results are extended to encompass general spiked models. In \cite{baik2006eigenvalues}, the limit of convergence for the eigenvalues of the sample covariance matrix in a general family of samples is completely determined. In recent years, the study of spiked eigenvalues remains a prominent area of research. In \cite{cai2020limiting}, the authors investigated the asymptotic distributions of both the spiked eigenvalues and the maximum nonspiked eigenvalues of the sample covariance matrices under a general covariance model with diverging spiked eigenvalues. In \cite{zhang2022asymptotic}, the authors demonstrate, under a general high-dimensional spiked sample covariance model, that the leading sample spiked eigenvalues and their linear spectral statistics are asymptotically independent when the sample size scales proportionally with dimension. Furthermore, the need for the block diagonal assumption on the overall covariance matrix, which is typically required in the literature, is alleviated. Similarly, in \cite{jiang2021limits}, for a class of generalized spiked Fisher matrices, the authors also dispense with the assumption that the covariance matrix is diagonal or diagonal block-structured. They proceed to investigate the almost sure limits of the sample spiked eigenvalues when the population covariance matrix is arbitrary. Furthermore, \cite{hou2023spiked} explored the asymptotic behavior of the spiked eigenvalues of non-central Fisher matrices and derived the CLT for these eigenvalues.

With the continuous evolution of information technology, the significance of handling vast quantities of data has grown substantially in recent years. Due to challenges such as prolonged computing times, memory limitations, communication overheads, and confidentiality considerations, the practice of distributed data analysis has gained paramount importance. Many classical statistical estimators have adapted to distributed frameworks.
In the domain of linear regression, \cite{dobriban2021distributed} introduced the concept of distributed least squares weighted estimation. When the dimensionality exceeds the sample size, rendering standard least squares estimation impractical, \cite{battey2018distributed} proposed distributed estimation techniques for sparse parameter vectors, employing the debiased estimator of the lasso. Additionally, \cite{chen2020distributed} addressed the challenge of testing under conditional assumptions in the context of distributed estimation, specifically for sparse parameters and heavy-tailed noise.
Within Principal Component Analysis (PCA), \cite{fan2019distributed} advocated an averaging approach for estimating the principal eigenspace within a distributed framework. Notably, \cite{li2021robust} extended their algorithm to accommodate cases involving heavy-tailed distributions, in contrast to the assumption of sub-Gaussian distribution in \cite{fan2019distributed}.
Furthermore, \cite{duan2022heterogeneity} introduced a novel likelihood estimation method rooted in likelihood estimation, applying it to a distributed heterogeneous environment. \cite{wang2019distributed} provided fresh insights into the statistical properties of support vector machines, proposing a multi-round linear estimator tailored for distributed environments. The integration of a distributed framework with a bootstrap approach to enhance computational efficiency was presented in \cite{chen2021distributed}. Moreover, a distributed architecture for the two-sample U statistic was advocated by \cite{huang2023distributed}, with a notable focus on privacy concerns, a topic also addressed by \cite{imtiaz2018differentially}. These represent current and highly pertinent issues in the field.

This paper investigates the distributed estimation of spiked eigenvalues in the spiked population model. We introduce a weighted estimator, which offers the significant advantage of requiring only one communication step, leading to higher transmission efficiency compared to direct averaging. Additionally, weighting proves to be an effective approach for handling heterogeneity.
In \cite{dobriban2021distributed}, we observe the first instance of distributed estimation in the weighted case, where the expression for the weights is straightforward due to the least squares estimation. However, in \cite{dobriban2020wonder}, as ridge estimation is not unbiased, their weights need not necessarily sum to 1. Furthermore, the weights in \cite{dobriban2020wonder} involve the estimation of unknown parameters, aligning with our approach to weight assignment in this paper.
In \cite{gu2022weighted}, emphasis is placed on the M-estimation problem in the context of heterogeneity. The paper proposes a weighting matrix to address this issue. While the primary focus of \cite{duan2022heterogeneity} lies in distributed likelihood estimation, it also introduces the concept of weighting to more effectively manage heterogeneity.

The rest of the article is shown below. 
Section \ref{sec:2} focuses on the spiked population model and the CLT for spiked eigenvalues. In Section \ref{sec:3}, under the distributed framework, we design a distributed weighted estimator of spiked eigenvalues, and give the compatibility and asymptotic normality of the weighted estimator. In Section \ref{sec:4}, the theoretical result on the statistical error of the weighted estimator is obtained. In Section \ref{sec:5}, we simulate the asymptotic properties of the weighted estimator as well as the statistical error results. In addition, we also include an empirical analysis of a real case such as rice data, and the results show that our weighted estimator also shows its good properties in real applications. The discussion and future perspectives are presented in Section \ref{sec:6}.

\section{Problem setup\label{sec:2}}

Recall the sample covariance matrix
$$\bm{S}_n=\frac{1}{n}\bm{V}\bm{X}\bm{X}^*\bm{V}^*.$$
where  $\bm{X} =\left(\bm{x}_{1}, \dots, \bm{x}_n\right) =\left(x_{ij}\right)_{p\times n}$, $\bm{V}$ is a $p\times p$ deterministic matrix and define $\bm{\Sigma}=\bm{V}\bm{V}^*$.   Let
$$\lambda_{n,1}\ge \lambda_{n,2}\ge \cdots \ge \lambda_{n,p} $$
be the ordered eigenvalues of the sample covariance matrix $\bm{S}_n$, and
$$\bm{V}=\bm{L}\left(\begin{array}{cc} \bm{A}_M^{1/2} & 0\\
	0 & \bm{I}_{p-M} \end{array}\right)\bm{U}^*,$$
where $\bm{L}$ and $\bm{U}$ are unitary matrices, $\bm{A}_M$=diag$\left(\alpha_1, \alpha_2, \dots, \alpha_M\right)$ is a diagonal matrix, $M$ is fixed. Here, $\alpha_1, \alpha_2, \dots, \alpha_M$ are non-null and non-unit eigenvalues, and we call these eigenvalues  $\alpha_j$  the \emph{spiked population eigenvalues}. 
Chunking $\bm{U}$ as $\bm{U}=\left(\bm{U}_1, \bm{U}_2\right)$, where $\bm{U}_1$ is a $p\times M$ submatrix of $\bm{U}$. Let $\bm{u}_k=\left(u_{k1},u_{k2},\dots,u_{kp}\right)^*$ be the $k$th column of $\bm{U}_1$.

The purpose of this article is to estimate the spiked population eigenvalues that lie outside the interval of $\left[1-\sqrt{y},1+\sqrt{y}\right]$. Suppose that $\bm{A}_M$ has $M_a$  eigenvalues less than $1-\sqrt{y}$ and $M_b$  eigenvalues greater than $1+\sqrt{y}$, i.e.,
$$\alpha_1>\alpha_2>\cdots >\alpha_{M_b}>1+\sqrt{y},$$ 
$$\alpha_M<\alpha_{M-1}<\cdots <\alpha_{M-M_a+1}<1-\sqrt{y}.$$

According to \cite{bai2008methodologies}, it is known that the empirical spectral distribution (ESD) of $\bm{S}_n$ converges weakly to the M-P law with $\sigma^2=1$. Additionally, when $\alpha_k > 1+\sqrt{y}$, \cite{baik2006eigenvalues} demonstrated that:
$$\lambda_{n,k}\to \phi(\alpha_k)=\alpha_k+\frac{y\alpha_k}{\alpha_k-1}, \quad almost \ surely.$$ 
Furthermore, in the work of \cite{bai2008central}, they established the CLT for these extreme sample eigenvalues lying outside the interval $\left[(1-\sqrt{y})^2, (1+\sqrt{y})^2\right]$, based on the following assumptions.

\begin{Assum}
	\label{ass1}
	As $n\to\infty $, $p/n\to y\in (0,1)$.
\end{Assum}
\begin{Assum}
	\label{ass2}
	$\{x_{ij},1\leq i\leq p,1\leq j\leq n\}$ are i.i.d. with $\mathbb{E}x_{ij}=0$, $\mathbb{E}\vert x_{ij}\vert^2=1$, and $\mathbb{E}\vert x_{ij}\vert^4=\gamma_4<\infty $.	
\end{Assum}
\begin{Assum}
	\label{ass3}
	$\alpha_k \notin \left[1-\sqrt{y}, 1+\sqrt{y}\right]$, $k\in\{1,\dots,M_b,M-M_a+1,\dots,M\}$. 
\end{Assum}
\begin{Assum}
	\label{ass4}
	As $n\to \infty$, the ESD ${H}_n$ of $\bm{\Sigma}$ tends to a probability distribution ${H}$. 
\end{Assum}

\begin{lemma}[Theorem 3.1 in \cite{bai2008central}]
	\label{lem0}
	Given that Assumptions \ref{ass1}--\ref{ass4} hold. For each spiked eigenvalue $\alpha_k \notin \left[1-\sqrt{y}, 1+\sqrt{y}\right]$, the random variable 
	$$\sqrt{n}(\lambda_{n,j}-\phi(\alpha_k)),$$
    converges weakly to the Gaussian distribution with mean 0 and variance 
    $$\left(\gamma_4-3\right)\frac{\alpha_k^2\left[\left(\alpha_k-1\right)^2-y\right]^2}{\left(\alpha_k-1\right)^4}\sum_{t=1}^{p}u_{kt}^4+\frac{2\alpha_k^2\left[\left(\alpha_k-1\right)^2-y\right]}{\left(\alpha_k-1\right)^2},$$
  	when $k\in\left\{1,\dots, M_b\right\}$, $j=k$ and $k \in \left\{M-M_a+1,\dots, M\right\}$, $j=p-M+k$. 	
	
\end{lemma}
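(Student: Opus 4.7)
The plan is to follow the Bai--Yao reduction: transform the $p$-dimensional eigenvalue problem into an $M \times M$ determinantal equation, show that its solutions near $\phi(\alpha_k)$ correspond to the spiked sample eigenvalues, and then extract the CLT by combining a CLT for the reduced matrix with an implicit function argument. First, I would exploit the factorization of $\bm{V}$ by changing coordinates: set $\bm{Y} = \bm{U}^* \bm{X}$ and partition $\bm{Y} = (\bm{Y}_1^*, \bm{Y}_2^*)^*$ with $\bm{Y}_1$ of size $M \times n$. In these coordinates the similarity transform $\bm{L}^* \bm{S}_n \bm{L}$ has top-left block $n^{-1} \bm{A}_M^{1/2} \bm{Y}_1 \bm{Y}_1^* \bm{A}_M^{1/2}$ and bottom-right block $n^{-1} \bm{Y}_2 \bm{Y}_2^*$; crucially the $k$-th row of $\bm{Y}_1$ equals $\bm{u}_k^* \bm{X}$, which is where the coefficients $u_{kt}$ will enter the variance. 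For $\lambda$ outside the (M-P) spectrum of $n^{-1} \bm{Y}_2 \bm{Y}_2^*$, the Schur complement identity reduces $\det(\lambda \bm{I}_p - \bm{S}_n) = 0$ to
\begin{equation*}
\det\!\left(\lambda \bm{A}_M^{-1} - \bm{Q}_n(\lambda)\right) = 0, \qquad \bm{Q}_n(\lambda) := \tfrac{1}{n} \bm{Y}_1 \bm{Y}_1^* + \tfrac{1}{n^2} \bm{Y}_1 \bm{Y}_2^* \!\left(\lambda \bm{I}_{p-M} - \tfrac{1}{n} \bm{Y}_2 \bm{Y}_2^*\right)^{-1}\! \bm{Y}_2 \bm{Y}_1^*.
\end{equation*}

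Second, standard concentration for resolvent traces and quadratic forms tied to the M-P law gives $\bm{Q}_n(\lambda) \to q(\lambda) \bm{I}_M$ in probability, where $q$ is a deterministic function of the Stieltjes transform of the M-P law; the equation $\lambda \alpha_k^{-1} = q(\lambda)$ has $\phi(\alpha_k)$ as its unique solution near $\alpha_k$, recovering the first-order result $\lambda_{n,j} \to \phi(\alpha_k)$ of \cite{baik2006eigenvalues}. For the CLT, I would next analyse the fluctuation $\sqrt{n}(\bm{Q}_n(\lambda) - q(\lambda) \bm{I}_M)$ at $\lambda = \phi(\alpha_k)$. The $(k,k)$-entry naturally decomposes into a centered quadratic form $n^{-1} \|\bm{y}_{1,k}\|^2 - 1$, whose variance contributes $(\gamma_4 - 3) \sum_t u_{kt}^4 + 2$ by direct moment calculation on $\bm{y}_{1,k} = \bm{u}_k^* \bm{X}$, plus a resolvent bilinear form whose contribution is captured via Bai--Silverstein type CLTs for linear spectral statistics. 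An implicit function / perturbation step applied to the reduced determinantal equation then yields
\begin{equation*}
\sqrt{n}\!\left(\lambda_{n,j} - \phi(\alpha_k)\right) = \frac{\sqrt{n}\,\bigl(\bm{Q}_n(\phi(\alpha_k))_{kk} - q(\phi(\alpha_k))\bigr)}{\alpha_k^{-1} - q'(\phi(\alpha_k))} + o_p(1),
\end{equation*}
and the denominator simplifies so that its reciprocal produces the factor $\phi'(\alpha_k) = [(\alpha_k - 1)^2 - y]/(\alpha_k - 1)^2$ appearing in the final variance.

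The main obstacle is the variance bookkeeping. One has to carefully combine the kurtosis term $(\gamma_4 - 3) \sum_t u_{kt}^4$ from the diagonal part of the quadratic form with the Gaussian-like constant from its off-diagonal part, both multiplied by the appropriate powers of $\phi'(\alpha_k)$ coming from the implicit function step, and show that these reproduce exactly the two summands in the stated variance. In parallel, one must verify that the off-diagonal entries of $\sqrt{n}(\bm{Q}_n - q \bm{I}_M)$ do not contaminate the asymptotic variance of the $k$-th spiked eigenvalue; this follows from the strict ordering of the spikes in the setup (each $\alpha_k$ is simple), so that perturbation theory can be applied to the determinantal equation coordinate by coordinate near $\phi(\alpha_k)$.
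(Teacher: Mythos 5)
The paper does not prove this lemma at all: it is imported verbatim as Theorem 3.1 of \cite{bai2008central}, and your outline is essentially the Bai--Yao argument from that source — Schur-complement reduction of $\det(\lambda\bm{I}_p-\bm{S}_n)=0$ to an $M\times M$ determinantal equation in the random sesquilinear form $\bm{Q}_n(\lambda)$, a CLT for $\sqrt{n}\bigl(\bm{Q}_n(\phi(\alpha_k))-q(\phi(\alpha_k))\bm{I}_M\bigr)$, and a delta-method/implicit-function transfer to the eigenvalue. The one point worth being careful about if you were to write this out in the generality stated here (general unitary $\bm{U}$, so that $\sum_{t=1}^p u_{kt}^4$ genuinely involves all $p$ coordinates) is that $\bm{Y}_1=\bm{U}_1^*\bm{X}$ and $\bm{Y}_2=\bm{U}_2^*\bm{X}$ are then uncorrelated but not independent for non-Gaussian entries, so the "standard concentration" and quadratic-form CLT in your second step require the refinements of the later non-block-diagonal treatments (e.g.\ \cite{zhang2022asymptotic}) rather than the independence used in \cite{bai2008central}.
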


\begin{rem}
Observing the variance form above, two simple facts emerge: 
$\gamma_4\ge1$ due to the \emph{H$\ddot{o}$lder's inequality} and 
$\sum_{t=1}^{p}u_{kt}^4\le 1$. Therefore, the limit of the variance exists as $p\to\infty$.
\end{rem}

Given the smooth invertibility of the function $\phi(\alpha_k)$, we can employ its inverse to obtain an estimate of $\alpha_k$: 
$$\hat{\alpha}_{n,j}=\frac{\lambda_{n,j}+1-y\pm \sqrt{(\lambda_{n,j}+1-y)^2-4\lambda_{n,j}}}{2},$$
for $\lambda_{n,j}\notin \left[(1-\sqrt{y})^2, (1+\sqrt{y})^2\right] $. 

Regarding the choice of positive and negative signs, for $\alpha_k>1+\sqrt{y}$, it is known that when $y\to 0$ , there should be $\lambda_{n,k}\to \alpha_k$ and $\hat{\alpha}_{n,k}\to \alpha_k$. Also consider that $\phi\left(\alpha_k\right)$ is continuous with respect to $y$, so for $\alpha_k>1+\sqrt{y}$, this estimate should be:
$$\hat{\alpha}_{n,j}=\frac{\lambda_{n,j}+1-y+\sqrt{(\lambda_{n,j}+1-y)^2-4\lambda_{n,j}}}{2}, \quad j=k.$$
Applying the Delta method and Lemma \ref{lem0}, we can derive the asymptotic distribution of $\sqrt{n}\left(\hat{\alpha}_{n,k}-\alpha_k\right)$. Similarly, for $\alpha_k<1-\sqrt{y}$ , we estimate $\alpha_k$ with  
$$\hat{\alpha}_{n,j}=\frac{\lambda_{n,j}+1-y-\sqrt{(\lambda_{n,j}+1-y)^2-4\lambda_{n,j}}}{2}, \quad j=p-M+k,$$
and the asymptotic distribution of $\sqrt{n}\left(\hat{\alpha}_{n,j}-\alpha_k\right)$, can be obtained by Lemma \ref{lem0} and Delta method. Thus, we have the following theorem.
\begin{theorem}
	\label{th5}
	Given that Assumptions \ref{ass1}--\ref{ass4} hold. For each spiked eigenvalue $\alpha_k \notin \left[1-\sqrt{y}, 1+\sqrt{y}\right]$, the random variable 
$$\sqrt{n}(\hat{\alpha}_{n,j}-\alpha_k),$$
converges weakly to the Gaussian distribution with mean 0 and variance 
$$\sigma_{n,j}^2:=\left(\gamma_4-3\right)\alpha_k^2\sum_{t=1}^{p}u_{kt}^4+\frac{2\alpha_k^2\left(\alpha_k-1\right)^2}{\left(\alpha_k-1\right)^2-y},$$
 when $k\in\left\{1,\dots, M_b\right\}$, $j=k$ and $k \in \left\{M-M_a+1,\dots, M\right\}$, $j=p-M+k$. 
\end{theorem}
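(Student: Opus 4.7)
The plan is to apply the Delta method to Lemma \ref{lem0}, composing the asymptotic result for $\lambda_{n,j}$ with the explicit inverse $\hat\alpha_{n,j}=\phi^{-1}(\lambda_{n,j})$ written just before the theorem. Since $\alpha_k\notin[1-\sqrt y,1+\sqrt y]$ by Assumption \ref{ass3}, the appropriate branch of $\phi^{-1}$ (the $+$ branch for $\alpha_k>1+\sqrt y$, the $-$ branch for $\alpha_k<1-\sqrt y$, as selected in the discussion preceding the theorem) is continuously differentiable in a neighbourhood of $\phi(\alpha_k)$, so the Delta method applies without any further subtlety.

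The only real computation is the derivative of $\phi$. A direct calculation gives
\[
\phi'(\alpha) \;=\; 1-\frac{y}{(\alpha-1)^2} \;=\; \frac{(\alpha-1)^2-y}{(\alpha-1)^2},
\]
which is nonzero at $\alpha_k$ precisely because $\alpha_k\notin[1-\sqrt y,1+\sqrt y]$. The inverse function theorem therefore yields
\[
\bigl(\phi^{-1}\bigr)'\!\bigl(\phi(\alpha_k)\bigr) \;=\; \frac{(\alpha_k-1)^2}{(\alpha_k-1)^2-y}.
\]

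Denote by $\tau_k^2$ the variance in Lemma \ref{lem0}. The Delta method then gives $\sqrt n(\hat\alpha_{n,j}-\alpha_k)\Rightarrow \mathcal N\bigl(0,\,[(\phi^{-1})'(\phi(\alpha_k))]^2\,\tau_k^2\bigr)$. Multiplying $\tau_k^2$ by the factor $(\alpha_k-1)^4/[(\alpha_k-1)^2-y]^2$ cancels exactly the matching factor inside the $(\gamma_4-3)$ summand of $\tau_k^2$, leaving $(\gamma_4-3)\alpha_k^2\sum_t u_{kt}^4$; in the remaining summand one factor of $(\alpha_k-1)^2-y$ cancels and one factor of $(\alpha_k-1)^2$ survives, producing $2\alpha_k^2(\alpha_k-1)^2/[(\alpha_k-1)^2-y]$. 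Adding these two pieces reproduces the expression for $\sigma_{n,j}^2$ in the statement.

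I do not foresee a genuine obstacle: the hardest point is simply ensuring the correct branch of $\phi^{-1}$ is used in each of the two cases, which is already justified by the $y\to 0$ continuity argument given in the paper. Once the branch is fixed, the result is a one-line application of the Delta method followed by the short algebraic simplification above; remark \emph{(on limits of variances)} immediately preceding the theorem ensures the limiting variance is well-defined as $p\to\infty$.
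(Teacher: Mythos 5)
Your proposal is correct and matches the paper's own (very brief) argument: the paper proves Theorem \ref{th5} exactly by applying the Delta method to Lemma \ref{lem0} via the inverse of $\phi$, with the branch of $\phi^{-1}$ fixed by the $y\to 0$ continuity argument. Your explicit computation of $\phi'(\alpha)=\bigl[(\alpha-1)^2-y\bigr]/(\alpha-1)^2$ and the resulting cancellation in the variance is accurate and simply fills in the algebra the paper omits.
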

\begin{rem}
In $\sigma_{n,j}^2$, it is evident that $\gamma_4$ and $\sum_{t=1}^{p}u_{kt}^4$ are unknown. If we intend to utilize the theorem for hypothesis testing related to spiked eigenvalues, it is imperative to estimate both $\gamma_4$ and $\sum_{t=1}^{p}u_{kt}^4$. Lemma \ref{lem3} below provides an estimate for $\sum_{t=1}^{p}u_{kt}^4$. 
For the estimation of $\gamma_4$, refer to Theorem 2.7 in \cite{Zhangh19I} for detailed information.
\end{rem}

\begin{lemma}[Theorem 2.6 in \cite{zhang2022asymptotic} ] 
	\label{lem3}
	Given that Assumption \ref{1}--\ref{4} hold and $\bm{V}$ is symmetric. Let $\hat{\bm{u}}_k=\left(\hat{u}_{k1}, \dots, \hat{u}_{kp}\right)$ be eigenvectors of $\bm{S}_n$ associated with eigenvalue $\lambda_{n,k}$ and $\hat{u}_{kt}$ be the $t$th coordinate of $\hat{\bm{u}}_k$. For $1\le k\le M$, $\sum_{t=1}^{p}u_{kt}^4$ is consistently estimated by $\sum_{j=1}^{p}\left\{\sum_{t=1}^{p}\theta_k\left(t\right)\hat{u}_{tj}^2\right\}^2$, where

\begin{equation}
	\theta_k\left(t\right)=
	\begin{cases}
		-\varphi_k\left(t\right)& \text{ $ t=k, $ } \\
		1+\varrho_k\left(t\right)& \text{ $ t\ne k, $ }
	\end{cases}
\end{equation}
   $$\varphi_k\left(t\right)=\frac{\lambda_{n,k}}{\lambda_{n,t}-\lambda_{n,k}}-\frac{v_i}{\lambda_{n,t}-v_i},$$
   $$\varrho_k\left(t\right)=\sum_{j\ne k }^{p}\left(\frac{\lambda_{n,j}}{\lambda_{n,t}-\lambda_{n,j}}-\frac{v_j}{\lambda_{n,t}-v_j}\right),$$
	and where $v_1\ge v_2\ge \cdots \ge v_p$ are the real valued solutions to the equation in x:
	$$\frac{1}{p}\sum_{k=1}^{p}\frac{\lambda_{n,k}}{\lambda_{n,k}-x}=\frac{1}{y}.$$
	In the expressions of $\varphi_k\left(t\right)$ and $\varrho_k\left(t\right)$, we use the convention that any term of form $\frac{0}{0}$ is 0.
\end{lemma}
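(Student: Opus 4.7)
My approach is a contour-integral representation of eigenvector components combined with deterministic equivalents for the resolvent of $\bm{S}_{n}$, in the spirit of Mestre's consistent eigenvalue and eigenvector estimators. For each fixed coordinate $j\in\{1,\dots,p\}$ one has the identity
$$\sum_{t=1}^{p}f(\lambda_{n,t})\,\hat{u}_{tj}^{2}\;=\;-\frac{1}{2\pi i}\oint_{\Gamma}f(z)\,\bigl[(\bm{S}_{n}-z\bm{I})^{-1}\bigr]_{jj}\,dz,$$
where $\Gamma$ encloses the sample spectrum. The plan is to take $f=\theta_{k}$, evaluate the contour integral using the asymptotic form of the diagonal resolvent, and show that the result is $u_{kj}^{2}$ plus an error that is small enough to disappear after squaring and summing in $j$.

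First I would invoke the anisotropic local law / deterministic equivalent available under Assumptions \ref{ass1}--\ref{ass4} with $\bm{V}$ symmetric, which gives, for $z$ bounded away from the limiting support,
$$\bigl[(\bm{S}_{n}-z\bm{I})^{-1}\bigr]_{jj}\;=\;\sum_{s=1}^{p}\frac{u_{sj}^{2}}{\alpha_{s}\bigl(1-y-yz\,m(z)\bigr)-z}\;+\;o(1),$$
whose poles, viewed as a function of $z$, occur exactly at the quantities $v_{1}\ge\cdots\ge v_{p}$ solving $\tfrac{1}{p}\sum_{k}\lambda_{n,k}/(\lambda_{n,k}-x)=1/y$. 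Thus the $v_{s}$ are the sample-based proxies for the population eigenvalues produced by inverting the M--P map, and the residue at $z=v_{k}$ carries a clean copy of $u_{kj}^{2}$. The weights $\theta_{k}(t)$ are engineered so that, via partial fractions, the residues of $\theta_{k}(z)[(\bm{S}_{n}-z\bm{I})^{-1}]_{jj}$ at each $\lambda_{n,t}$ cancel in a way that isolates that residue at $v_{k}$. The split $\theta_{k}(t)=1+\varrho_{k}(t)$ for $t\ne k$ and $\theta_{k}(k)=-\varphi_{k}(k)$ corresponds precisely to matching the random sum $\sum_{j\ne k}\lambda_{n,j}/(\lambda_{n,t}-\lambda_{n,j})$ with its deterministic counterpart $\sum_{j\ne k}v_{j}/(\lambda_{n,t}-v_{j})$. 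Carrying out this partial-fractions bookkeeping yields the pointwise identity
$$\sum_{t=1}^{p}\theta_{k}(t)\,\hat{u}_{tj}^{2}\;=\;u_{kj}^{2}+\varepsilon_{n,j},$$
where $\varepsilon_{n,j}$ is an explicit residual governed by the discrepancy between the empirical sum and its deterministic equivalent.

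The hardest step, and the main obstacle, is upgrading this pointwise statement to the aggregate claim
$$\sum_{j=1}^{p}\Bigl\{\sum_{t=1}^{p}\theta_{k}(t)\hat{u}_{tj}^{2}\Bigr\}^{2}-\sum_{t=1}^{p}u_{kt}^{4}\;\longrightarrow\;0.$$
Expanding the square, the remainder equals $\sum_{j}(2u_{kj}^{2}\varepsilon_{n,j}+\varepsilon_{n,j}^{2})$, so one must control $\max_{j}|\varepsilon_{n,j}|$ (to handle the cross term using $\sum_{j}u_{kj}^{2}=1$) and also $\sum_{j}\varepsilon_{n,j}^{2}$. I expect the former to follow from entrywise anisotropic local-law bounds of order $n^{-1/2+\epsilon}$ applied to the finitely many resolvent-based quantities that define $\theta_{k}$, while the latter requires $L^{2}$ control obtained by integrating the local-law bound along $\Gamma$ and exploiting that only a fixed number $M$ of spikes appear while the $p-M$ bulk contributions average out. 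Preventing the accumulation of $p$ entrywise errors into something of order $\sqrt{p}$ via this averaging is where the bulk of the technical work lies.
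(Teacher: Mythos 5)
This lemma is not proved in the paper at all: it is imported verbatim as Theorem~2.6 of the cited reference, and the appendix only proves Proposition~\ref{pro1} and Theorems~\ref{th1}--\ref{th4}. So there is no in-paper argument to compare against, and your proposal has to stand on its own. At the level of strategy it is the right family of ideas: writing $\sum_{t}f(\lambda_{n,t})\hat{u}_{tj}^{2}$ as a contour integral of $[(\bm{S}_n-z\bm{I})^{-1}]_{jj}$, replacing the resolvent by its deterministic equivalent, and recognizing the $v_j$ (solutions of $\frac{1}{p}\sum_k\lambda_{n,k}/(\lambda_{n,k}-x)=1/y$) together with the weights $\varphi_k,\varrho_k$ as Mestre-type self-normalized corrections is exactly how estimators of this shape are constructed in the source literature.

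The proposal nevertheless has a genuine gap at the one step that makes the result nontrivial. You correctly reduce the claim to controlling $2\sum_j u_{kj}^2\varepsilon_{n,j}+\sum_j\varepsilon_{n,j}^2$, and the cross term is indeed handled by $\max_j|\varepsilon_{n,j}|$ together with $\sum_j u_{kj}^2=1$. But the quadratic term cannot be closed by the entrywise local-law bound you invoke: with $\max_j|\varepsilon_{n,j}|\lesssim n^{-1/2+\epsilon}$ one only gets $\sum_j\varepsilon_{n,j}^2=O(p\,n^{-1+2\epsilon})\asymp y\,n^{2\epsilon}$, which does not vanish when $p\asymp n$; and the fluctuation of a diagonal resolvent entry at a fixed $z$ off the spectrum is genuinely of order $n^{-1/2}$, so no sharper worst-case entrywise bound is available. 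Consistency therefore hinges entirely on cancellation across $j$ --- i.e., on showing something like $\mathbb{E}\,\varepsilon_{n,j}^2=o(1/p)$ uniformly, or an averaged (isotropic) bound exploiting that both the $\lambda_{n,t}$ and the $v_t$ entering $\theta_k$ are sample quantities whose errors offset each other. You name this as ``where the bulk of the technical work lies'' but supply no mechanism for it, so the argument is incomplete precisely at the point where the lemma could fail. A secondary imprecision: the poles of the deterministic equivalent of $[(\bm{S}_n-z\bm{I})^{-1}]_{jj}$ are deterministic, not ``exactly at'' the random $v_1\ge\cdots\ge v_p$; identifying the two requires a separate eigenvalue-rigidity step that should be stated rather than assumed.
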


In this section, we provided a comprehensive explanation of the estimation of spiked eigenvalues that fall outside the interval $\left[1-\sqrt{y}, 1+\sqrt{y}\right]$, along with its corresponding asymptotic distribution.
The following section will address the approach to estimating these spiked eigenvalues within the framework of a distributed environment.

\section{Weighted average estimator\label{sec:3}}

Suppose the samples are distributed across $m$ machines, with $n_{\ell}$ samples on the $\ell$th machine. These samples are independently and i.i.d., and the sample covariance matrix on the $\ell$th machine is denoted as:
\begin{equation}
	\bm{S}_{n_{\ell}}^{(\ell)}=\frac{1}{n_{\ell}}\bm{V}\bm{X}^{(\ell)}\bm{X}^{(\ell)*}\bm{V}^*,\quad \ell=1, \dots, m.
	\label{1}
\end{equation}
Where  $\bm{X}^{(\ell)} = \left(\bm{x}_{1}^{(\ell)}, \dots, \bm{x}_{n_{\ell}}^{(\ell)}\right) =\left(x_{ij}^{(\ell)}\right)_{p\times {n_{\ell}}}$. 

The overarching question is: How can we estimate the unknown spiked eigenvalues (denoted as $\alpha_k\notin\left[1-\sqrt{y}, 1+\sqrt{y}\right]$) if we aim to perform the majority of computations locally?

We begin by exploring the aggregation of local spiked eigenvalue estimators at a parameter server through one-step weighted averaging. An asymptotically unbiased estimate of $\alpha_{k}$ is obtainable for each machine. Given the independence of the machines, we contemplate a weighted asymptotically unbiased estimate:

\begin{equation}
	\hat{\alpha}_k=\sum_{{\ell}=1}^{m}\omega_{\ell}\hat{\alpha}_{n_{\ell},j}^{(\ell)},
	\label{2}
\end{equation}
with $\sum_{{\ell}=1}^{m}\omega_{\ell}=1$. 
Aach machine provides an estimate of $\alpha_k$ denoted as $\hat{\alpha}_{n_{\ell},j}^{(\ell)}$. On the $\ell$th machine, $\hat{\alpha}_{n_{\ell},j}^{(\ell)}$ is calculated as:
\begin{equation}
	\hat{\alpha}_{n_{\ell},j}^{(\ell)}=\frac{\lambda_{n_{\ell},j}^{(\ell)}+1-y_{\ell}+ \sqrt{(\lambda_{n_{\ell},j}^{(\ell)}+1-y_{\ell})^2-4\lambda_{n_{\ell},j}^{(\ell)}}}{2}
	\label{5}
\end{equation}
for $k\in\left\{1,\dots,M_b\right\}$ and $j=k$.  
Additionally,
\begin{equation}
	\hat{\alpha}_{n_{\ell},j}^{(\ell)}=\frac{\lambda_{n_{\ell},j}^{(\ell)}+1-y_{\ell}- \sqrt{(\lambda_{n_{\ell},j}^{(\ell)}+1-y_{\ell})^2-4\lambda_{n_{\ell},j}^{(\ell)}}}{2}
	\label{7}
\end{equation}
for $k\in\left\{M-M_a+1,\dots,M\right\}$ and $j=p-M+k$, where $\lambda_{n_{\ell},j}^{(\ell)}$ is the eigenvalues of the sample covariance matrix $\bm{S}_{n_{\ell}}^{(\ell)}$ and $y_{\ell}$ is defined by Assumption \ref{ass6} below. The samples on each machine obey the following assumptions.

\begin{Assum}
	\label{ass5}
	$\bm{X}^{(\ell)}$, $\ell=1,\dots,m$,  are independent with $x_{ij}^{(\ell)}$ satisfying Assumption \ref{ass2}.
\end{Assum}

\begin{Assum}
	\label{ass6}
	On each machine, as $n_{\ell}\rightarrow\infty$ and $p/n_{\ell}\to y_{\ell}\in(0,1)$, ${\ell}=1, \dots, m$, $n=\sum_{\ell=1}^{m}n_{\ell}$.
\end{Assum}

\begin{Assum}
	\label{ass7}
	$\alpha_k \notin \left[1-\sqrt{y_{\ell}}, 1+\sqrt{y_{\ell}}\right]$ for all $\ell=1,\dots,m$.
\end{Assum}

	Consider the distributed spiked eigenvalues problem described above and a data set that consists of $n$ data samples. The data set is distributed across $m$ sites. We compute the local spiked eigenvalues estimator $\hat{\alpha}_{n_{\ell},j}^{(\ell)}$ on each data set. Then send the local estimates to the central location, and combine them by weighting, i.e., $\hat{\alpha}_k=\sum_{\ell=1}^{m}\omega_{\ell}\hat{\alpha}_{n_{\ell},j}^{(\ell)}$. Then  we have the following proposition. 
\begin{proposition}[Asymptotically optimal weights]
	\label{pro1}
	Given that Assumption \ref{ass5}--\ref{ass7} hold. The asymptotically optimal weights, which minimize the mean square error of the distributed estimator under the spiked population model as $n_{\ell}\to \infty$, $\ell=1,\dots,m,$ are given by:
  \begin{equation}
  	\omega_{\ell}=\frac{n_{i}/\sigma_{\ell}^2}{\sum_{i=1}^{m}n_{\ell}/\sigma_i^2}, \quad \ell=1, \dots, m,
  	\label{eq1}
  \end{equation}
		where
		\begin{equation}
		\sigma_{\ell}^2:=\left(\gamma_4^{(\ell)}-3\right)\alpha_k^2\sum_{t=1}^{p}{u^{(\ell)}}_{kt}^4+\frac{2\alpha_k^2\left(\alpha_k-1\right)^2}{\left(\alpha_k-1\right)^2-y_{\ell}}, \quad \ell=1, \dots, m.
		\label{eq2}
		\end{equation}
	
	The limit of the mean square error of the asymptotically optimal weighted distributed spiked eigenvalues estimator $\hat{\alpha}_k$ with $m$ nodes is equal to
	$$\lim_{n_{\ell}\to\infty}\mathbb{E}\left(\sum_{\ell=1}^{m}\omega_{\ell}\hat{\alpha}_{n_{\ell},j}^{(\ell)}-\alpha_k\right)^2=\frac{1}{\sum_{\ell=1}^{m}n_{\ell}/\sigma_{\ell}^2}.$$

\end{proposition}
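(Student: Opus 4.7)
The plan is to translate the per-machine CLT from Theorem \ref{th5} into an asymptotic variance for each local estimator, use independence across machines to express the MSE of $\hat{\alpha}_k$ as a weighted sum of local variances, and then solve the resulting constrained quadratic optimization by Lagrange multipliers.

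First I would apply Theorem \ref{th5} on each machine separately. Since Assumptions \ref{ass5}--\ref{ass7} guarantee that the local samples $\bm{X}^{(\ell)}$ satisfy on the $\ell$th machine the hypotheses of Theorem \ref{th5} (with $n$, $y$ replaced by $n_\ell$, $y_\ell$), we obtain that $\sqrt{n_\ell}(\hat{\alpha}_{n_\ell,j}^{(\ell)}-\alpha_k)$ converges weakly to $\mathcal{N}(0,\sigma_\ell^2)$ with $\sigma_\ell^2$ as defined in \eqref{eq2}. In particular, $\mathrm{Var}(\hat{\alpha}_{n_\ell,j}^{(\ell)})\to \sigma_\ell^2/n_\ell$ and $\mathbb{E}\hat{\alpha}_{n_\ell,j}^{(\ell)}\to \alpha_k$ in the limiting sense (this is the main technical point, addressed below).

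Next I would expand the MSE. Writing $\hat{\alpha}_k-\alpha_k=\sum_\ell \omega_\ell(\hat{\alpha}_{n_\ell,j}^{(\ell)}-\alpha_k)$ since $\sum_\ell\omega_\ell=1$, and invoking the independence of $\bm{X}^{(1)},\dots,\bm{X}^{(m)}$ from Assumption \ref{ass5} to kill the cross terms (they factor into products of asymptotically vanishing biases), I would obtain
\begin{equation*}
\lim_{n_\ell\to\infty}\mathbb{E}(\hat{\alpha}_k-\alpha_k)^2 = \sum_{\ell=1}^m \omega_\ell^2\,\frac{\sigma_\ell^2}{n_\ell}.
\end{equation*}
I would then minimize this quadratic form in $\omega=(\omega_1,\dots,\omega_m)$ subject to $\sum_\ell\omega_\ell=1$ via the Lagrangian $L(\omega,\lambda)=\sum_\ell \omega_\ell^2\sigma_\ell^2/n_\ell - \lambda(\sum_\ell\omega_\ell-1)$. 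The first-order conditions $2\omega_\ell\sigma_\ell^2/n_\ell=\lambda$ give $\omega_\ell\propto n_\ell/\sigma_\ell^2$, and enforcing $\sum_\ell\omega_\ell=1$ yields exactly \eqref{eq1}. Substituting back collapses the sum to the claimed limit $1/\sum_\ell n_\ell/\sigma_\ell^2$.

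The main obstacle is the step from the weak convergence in Theorem \ref{th5} to convergence of second moments, since weak convergence alone does not imply convergence of variances. To make the MSE calculation rigorous I would invoke uniform integrability of $\{n_\ell(\hat{\alpha}_{n_\ell,j}^{(\ell)}-\alpha_k)^2\}_{n_\ell}$; this can be justified either by truncation arguments paralleling those in \cite{bai2008central} (used to derive the CLT for the extreme sample eigenvalues) combined with the smoothness of the inverse map $\phi^{-1}$ near $\phi(\alpha_k)$, or by interpreting the claim in the limiting-variance sense that the proposition's statement already adopts (``as $n_\ell\to\infty$''). Once uniform integrability is in hand, all remaining steps are purely algebraic and follow from the Lagrange computation sketched above.
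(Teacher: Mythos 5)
Your proposal is correct and follows essentially the same route as the paper: expand the MSE, use independence across machines to reduce it to a diagonal quadratic form, minimize by Lagrange multipliers under $\sum_\ell\omega_\ell=1$, and pass to the limit using the per-machine CLT from Theorem \ref{th5}. The one point where you are actually more careful than the paper is the moment-convergence step: the paper simply cites the Helly–Bray theorem to conclude $\tilde\sigma_\ell^2=\mathbb{E}(\hat\alpha_{n_\ell,j}^{(\ell)}-\alpha_k)^2\to\sigma_\ell^2/n_\ell$ from the weak convergence in Theorem \ref{th5}, but Helly–Bray applies to bounded continuous test functions, whereas $x\mapsto x^2$ is unbounded; some uniform-integrability (or truncation/tightness-of-moments) argument is genuinely needed, and you correctly flag and address this. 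A minor structural difference is the order of limit versus optimization: the paper first writes the exact finite-$n$ optimal weights $\omega_\ell^*=(1/\tilde\sigma_\ell^2)/\sum_i(1/\tilde\sigma_i^2)$ and then lets $n_\ell\to\infty$, while you pass to the limiting MSE $\sum_\ell\omega_\ell^2\sigma_\ell^2/n_\ell$ first and optimize that; both yield the same answer because the objective is a continuous quadratic in the $\tilde\sigma_\ell^2$'s, but the paper's ordering sidesteps any worry about interchanging $\lim$ and $\arg\min$.
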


Observing from \cref{eq1} and \cref{eq2}, it is evident that the asymptotically optimal weights incorporate the unknown quantity $\alpha_k$. Consequently, we require an initial estimator to provide an initial estimate, assuming the following conditions.

\begin{Assum}
	\label{ass8}
	The initial value $\bar{\alpha}_k$ satisfies:  $ \bar{\alpha}_k\stackrel{\mathscr{P}}{\longrightarrow}\alpha_k$, as $n_{\ell}\to\infty$, $\ell=1,\dots,m$. 
\end{Assum}

\begin{rem}
	$\bar{\alpha}_k=h\left(\hat{\alpha}_{n_{1},j}^{(1)}, \hat{\alpha}_{n_{2},j}^{(2)}, \dots, \hat{\alpha}_{n_{m},j}^{(m)}\right) $, where $h\left(\hat{\alpha}_{n_{1},j}^{(1)}, \hat{\alpha}_{n_{2},j}^{(2)}, \dots, \hat{\alpha}_{n_{m},j}^{(m)}\right)$ is some function of $\hat{\alpha}_{n_{\ell},j}^{(\ell)}, i=1,\dots,m. $
	Obviously, it is reasonable to take $\bar{\alpha}_k=\frac{1}{m}\sum_{1}^{m}\hat{\alpha}_{n_{\ell},j}^{(\ell)}$ or $\bar{\alpha}_k=\hat{\alpha}_{n_{1},j}^{(1)}$ as initial values. Throughout the paper, we write $\stackrel{\mathscr{P}}{\longrightarrow}$ and $\stackrel{\mathscr{F}}{\longrightarrow}$ as the convergence in probability and in distribution, respectively.
\end{rem}

The asymptotically optimal weights given in Proposition \ref{pro1} are not directly usable because several unknown factors $\left(i.e.,\gamma_4^{(\ell)}, \sum_{t=1}^{p}{u^{(\ell)}}_{kt}^4 \right)$ are included, so an estimate of the asymptotically optimal weights is necessary.

\begin{proposition}[Estimation of asymptotically optimal weights]
	\label{pro2}
	Given that Assumption \ref{ass5}--\ref{ass7} hold.  We can obtain the estimation of the asymptotically optimal weights is 
	\begin{equation}
		\hat{\omega}_{\ell}=\frac{n_{\ell}/\hat{\sigma}_{\ell}^2}{\sum_{\ell=1}^{m}n_{\ell}/\hat{\sigma}_i^2}, \quad \ell=1, \dots, m,
		\label{4}
	\end{equation}
	where $$\hat{\sigma}_{\ell}^2:=\left(\hat{\gamma}_4^{(\ell)}-3\right)\bar{\alpha}_k^2\widehat{\sum_{t=1}^{p}{u^{(\ell)}}_{kt}^4}+\frac{2\bar{\alpha}_k^2\left(\bar{\alpha}_k-1\right)^2}{\left(\bar{\alpha}_k-1\right)^2-y_{\ell}},\quad \ell=1, \dots, m,$$
	and where $\hat{\gamma}^{(\ell)}_4$, $\widehat{\sum_{t=1}^{p}{u^{(\ell)}}_{kt}^4}$ can be calculated through the (2.35) in \cite{zhang2022asymptotic} and Lemma \ref{lem3}, respectively. 
\end{proposition}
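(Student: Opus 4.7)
The natural claim to prove is that the plug-in weights are consistent for the asymptotically optimal weights, i.e.\ $\hat{\omega}_\ell \stackrel{\mathscr{P}}{\longrightarrow} \omega_\ell$ for each $\ell=1,\dots,m$, which in turn, via Slutsky's theorem, justifies replacing $\omega_\ell$ by $\hat\omega_\ell$ in the distributed estimator without altering the asymptotic behavior given in Proposition \ref{pro1}. The strategy is a two-stage application of the continuous mapping theorem: first show $\hat\sigma_\ell^2\stackrel{\mathscr{P}}{\longrightarrow}\sigma_\ell^2$ on each machine, then conclude $\hat\omega_\ell\stackrel{\mathscr{P}}{\longrightarrow}\omega_\ell$ from the continuity of the ratio \eqref{4}.

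First, I would collect the three consistency inputs that feed into $\hat\sigma_\ell^2$. By Assumption \ref{ass8}, the pilot estimator satisfies $\bar\alpha_k\stackrel{\mathscr{P}}{\longrightarrow}\alpha_k$. Applied to the $\ell$th local sample under Assumptions \ref{ass5}--\ref{ass6}, Theorem 2.7 of \cite{Zhangh19I} gives $\hat\gamma_4^{(\ell)}\stackrel{\mathscr{P}}{\longrightarrow}\gamma_4^{(\ell)}$. Similarly, Lemma \ref{lem3} applied on the $\ell$th machine gives $\widehat{\sum_{t=1}^{p}{u^{(\ell)}}_{kt}^4}\stackrel{\mathscr{P}}{\longrightarrow}\sum_{t=1}^{p}{u^{(\ell)}}_{kt}^4$.

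Next, I would write $\hat\sigma_\ell^2 = g_\ell\!\left(\bar\alpha_k,\hat\gamma_4^{(\ell)},\widehat{\sum_{t=1}^{p}{u^{(\ell)}}_{kt}^4}\right)$ with
$$g_\ell(a,b,c)=(b-3)a^2c+\frac{2a^2(a-1)^2}{(a-1)^2-y_\ell}.$$
Assumption \ref{ass7} implies $(\alpha_k-1)^2-y_\ell>0$, so $g_\ell$ is continuous in a neighborhood of $\bigl(\alpha_k,\gamma_4^{(\ell)},\sum_{t=1}^{p}{u^{(\ell)}}_{kt}^4\bigr)$ and the continuous mapping theorem yields $\hat\sigma_\ell^2\stackrel{\mathscr{P}}{\longrightarrow}\sigma_\ell^2>0$. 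The map $(s_1,\dots,s_m)\mapsto (n_\ell/s_\ell)/\sum_i(n_i/s_i)$ is continuous on the positive orthant, so a second application of the continuous mapping theorem delivers $\hat\omega_\ell\stackrel{\mathscr{P}}{\longrightarrow}\omega_\ell$. Combining this with the asymptotic normality of each $\hat\alpha_{n_\ell,j}^{(\ell)}$ from Theorem \ref{th5} and Slutsky's theorem then transfers the mean-square optimality of Proposition \ref{pro1} to the feasible version $\sum_\ell\hat\omega_\ell\hat\alpha_{n_\ell,j}^{(\ell)}$.

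The main obstacle I anticipate is not the continuous mapping step itself but the verification that Lemma \ref{lem3} may be invoked separately on each machine: that lemma is stated under its own numbered assumptions requiring a symmetric $\bm V$ and the ESD convergence in Assumption \ref{ass4}, so a small amount of bookkeeping is needed to confirm that Assumptions \ref{ass5}--\ref{ass7}, together with the global Assumption \ref{ass4} on $\bm\Sigma$, imply the local analogues with $p/n_\ell\to y_\ell\in(0,1)$. Care is also needed because $p$ diverges inside the sum defining $\widehat{\sum_{t=1}^{p}{u^{(\ell)}}_{kt}^4}$; however, Lemma \ref{lem3} already asserts consistency in this regime, so once applicability is checked the remainder is routine.
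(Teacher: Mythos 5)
The paper does not actually prove Proposition~\ref{pro2}: the Appendix explicitly lists proofs only for Proposition~\ref{pro1} and Theorems~\ref{th1}--\ref{th4}, because Proposition~\ref{pro2} is essentially a construction — it defines the plug-in estimator $\hat\omega_\ell$ obtained by substituting $\bar\alpha_k$, $\hat\gamma_4^{(\ell)}$, and $\widehat{\sum_{t=1}^p {u^{(\ell)}}_{kt}^4}$ into the formula from Proposition~\ref{pro1}. The consistency claim $\hat\omega_\ell\stackrel{\mathscr{P}}{\longrightarrow}\omega_\ell$ that you set out to establish is exactly what the paper states and proves as Theorem~\ref{th1}, and your argument there is the same as the paper's: continuity of $\sigma_\ell^2$ in $(\alpha_k,\gamma_4^{(\ell)},\sum_t {u^{(\ell)}}_{kt}^4)$, consistency of the three plug-ins via Assumption~\ref{ass8}, Eq.~(2.35) of \cite{zhang2022asymptotic}, and Lemma~\ref{lem3}, followed by the continuous mapping theorem (applied once to get $\hat\sigma_\ell^2\to\sigma_\ell^2$ and once more for the ratio). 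One small bookkeeping point you correctly noticed and that the paper glosses over: Theorem~\ref{th1}'s hypothesis only invokes Assumption~\ref{ass8}, yet its proof leans on Lemma~\ref{lem3}, which requires Assumption~\ref{ass4} and a symmetric $\bm V$, and on the per-machine ratio conditions in Assumptions~\ref{ass5}--\ref{ass7}; your explicit acknowledgment that these local analogues need to be verified on each machine is a mild improvement in rigor over the paper's one-line proof. Your closing remark about transferring Proposition~\ref{pro1}'s MSE optimality to the feasible estimator via Slutsky is what the paper separately does in Theorems~\ref{th2} and~\ref{th3}, so no genuine gap, just a mislabeling of which numbered statement the argument proves.
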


\begin{rem}
It is important to note that both $\hat{\gamma}^{(\ell)}_4$ and $\widehat{\sum_{t=1}^{p}{u^{(\ell)}}_{kt}^4}$ are estimated using the information from the sample covariance matrix on their respective machines, i.e., $\bm{S}_{n_{\ell}}^{(\ell)}$.
\end{rem}

The following theorem gives the compatibility of asymptotically optimal weight estimates.

\begin{theorem}
	\label{th1}
	Given that Assumption \ref{ass8} hold. As $n_{\ell}\to\infty$, $n=\sum_{{\ell}=1}^{m}n_{\ell}$, there are
	$$\hat{\omega}_{\ell}\stackrel{\mathscr{P}}{\longrightarrow} \omega_{\ell}, \quad {\ell}=1, \dots, m.$$
\end{theorem}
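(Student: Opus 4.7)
The plan is to reduce Theorem \ref{th1} to a componentwise consistency statement, $\hat{\sigma}_\ell^2 \xrightarrow{\mathscr{P}} \sigma_\ell^2$ for each $\ell=1,\dots,m$, and then assemble the final conclusion via the continuous mapping theorem applied to the ratio in \cref{4}. Since $\sigma_\ell^2$ depends on three unknowns -- $\alpha_k$, $\gamma_4^{(\ell)}$, and $\sum_{t=1}^p u_{kt}^{(\ell)4}$ -- while $\hat{\sigma}_\ell^2$ replaces each by its sample analogue, the work divides into verifying consistency for these three pieces separately.

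First, I would establish $\bar{\alpha}_k \xrightarrow{\mathscr{P}} \alpha_k$ directly from Assumption \ref{ass8}, so that $\bar{\alpha}_k^2 \xrightarrow{\mathscr{P}} \alpha_k^2$ and $(\bar{\alpha}_k-1)^2 \xrightarrow{\mathscr{P}} (\alpha_k-1)^2$ by continuous mapping. Next, I would invoke Lemma \ref{lem3} to conclude $\widehat{\sum_{t=1}^p u_{kt}^{(\ell)4}} \xrightarrow{\mathscr{P}} \sum_{t=1}^p u_{kt}^{(\ell)4}$ on each machine, and cite Theorem 2.7 in \cite{Zhangh19I} (as already referenced in the paper following Theorem \ref{th5}) to obtain $\hat{\gamma}_4^{(\ell)} \xrightarrow{\mathscr{P}} \gamma_4^{(\ell)}$. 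Both consistency results apply because $\bm{X}^{(\ell)}$ satisfies Assumptions \ref{ass1}--\ref{ass4} in the form required (with $p/n_\ell \to y_\ell \in (0,1)$ by Assumption \ref{ass6}, and the spiked structure preserved on each machine by Assumption \ref{ass7}).

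Once these three convergences are in hand, I would combine them using Slutsky's theorem together with the continuous mapping theorem to conclude
\[
\hat{\sigma}_\ell^2 = (\hat{\gamma}_4^{(\ell)}-3)\bar{\alpha}_k^2 \,\widehat{\sum_{t=1}^p u_{kt}^{(\ell)4}} + \frac{2\bar{\alpha}_k^2(\bar{\alpha}_k-1)^2}{(\bar{\alpha}_k-1)^2-y_\ell} \xrightarrow{\mathscr{P}} \sigma_\ell^2.
\]
Here I need to check that the map is continuous at the limit point: the denominator $(\alpha_k-1)^2 - y_\ell$ is strictly positive by Assumption \ref{ass7} (since $\alpha_k \notin [1-\sqrt{y_\ell},\,1+\sqrt{y_\ell}]$ forces $|\alpha_k-1| > \sqrt{y_\ell}$), so a standard neighborhood argument rules out the $0/0$ pathology with probability tending to one.

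Finally, since $\sigma_\ell^2 > 0$ for every $\ell$ (it is the asymptotic variance in Theorem \ref{th5}), the map $(s_1^2,\dots,s_m^2) \mapsto (n_\ell/s_\ell^2)/\sum_i (n_i/s_i^2)$ is continuous at $(\sigma_1^2,\dots,\sigma_m^2)$, and a further application of the continuous mapping theorem yields $\hat{\omega}_\ell \xrightarrow{\mathscr{P}} \omega_\ell$ for each $\ell$. The main obstacle I anticipate is not the algebraic manipulation but the careful bookkeeping to ensure each cited consistency result (Lemma \ref{lem3} and the $\gamma_4$ estimator from \cite{Zhangh19I}) remains valid on an individual machine under the distributed Assumptions \ref{ass5}--\ref{ass7}, rather than the pooled Assumptions \ref{ass1}--\ref{ass4}; this should be immediate since the per-machine data satisfies the single-machine hypotheses, but it is the only step requiring genuine justification rather than mechanical application of continuous mapping.
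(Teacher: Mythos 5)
Your proposal is correct and follows essentially the same route as the paper's proof: establish componentwise consistency of the quantities entering $\hat{\sigma}_\ell^2$ (via Assumption \ref{ass8} for $\bar{\alpha}_k$, Lemma \ref{lem3} for $\widehat{\sum_t u_{kt}^{(\ell)4}}$, and the cited estimator for $\gamma_4^{(\ell)}$), then invoke the continuous mapping theorem to pass first to $\hat{\sigma}_\ell^2 \to \sigma_\ell^2$ and then to $\hat{\omega}_\ell \to \omega_\ell$. Your write-up is more careful than the paper's, explicitly verifying that the denominator $(\alpha_k-1)^2-y_\ell$ stays bounded away from zero and that $\sigma_\ell^2>0$ so the final ratio map is continuous at the limit point, which the paper's terse proof leaves implicit.
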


In summary, we have obtained the asymptotically optimal weights and its estimation, and then we can obtain our final weighted estimator in the form of $\sum_{{\ell}=1}^{m}\hat{\omega}_{\ell}\hat{\alpha}_{n_{\ell},j}^{({\ell})}$. For this final weighted estimator, we obtain two important properties of it, compatibility and asymptotic normality, as shown in the following theorem.

\begin{theorem}[Compatibility]
	\label{th2}
	Given that Assumptions \ref{ass4}--\ref{ass8} hold. Under the assumption of the spiked population model in a distributed architecture, as $n_{\ell}\to\infty$, ${\ell}=1, \dots, m$, there are
	$$\sum_{{\ell}=1}^{m}\hat{\omega}_{\ell}\hat{\alpha}_{n_{\ell},j}^{({\ell})}\stackrel{\mathscr{P}}{\longrightarrow}\alpha_k.$$
\end{theorem}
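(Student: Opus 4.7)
The plan is to combine Theorem \ref{th1}, which gives $\hat{\omega}_\ell \stackrel{\mathscr{P}}{\longrightarrow} \omega_\ell$, with the consistency of each local estimator $\hat{\alpha}_{n_\ell,j}^{(\ell)}$, and then to invoke Slutsky's theorem for a finite sum of $m$ terms. The constraint $\sum_{\ell=1}^m \omega_\ell = 1$ from Proposition \ref{pro1} will then collapse the limit to $\alpha_k$.

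First I would establish, for each fixed $\ell \in \{1,\dots,m\}$, the marginal consistency $\hat{\alpha}_{n_\ell,j}^{(\ell)} \stackrel{\mathscr{P}}{\longrightarrow} \alpha_k$. This follows directly from Theorem \ref{th5} applied on machine $\ell$: under Assumptions \ref{ass4}--\ref{ass7} (which guarantee $\alpha_k \notin [1-\sqrt{y_\ell},1+\sqrt{y_\ell}]$ for every $\ell$, so $\phi_\ell$ is invertible at $\alpha_k$), we have $\sqrt{n_\ell}(\hat{\alpha}_{n_\ell,j}^{(\ell)} - \alpha_k)$ converging weakly to a centered Gaussian with variance $\sigma_\ell^2$ given by \cref{eq2}. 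Weak convergence to a tight limit after rescaling by $\sqrt{n_\ell}$ yields convergence in probability of $\hat{\alpha}_{n_\ell,j}^{(\ell)}$ to $\alpha_k$.

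Next, I would combine these with Theorem \ref{th1}. By the continuous mapping theorem applied coordinate-wise, the product $\hat{\omega}_\ell \hat{\alpha}_{n_\ell,j}^{(\ell)} \stackrel{\mathscr{P}}{\longrightarrow} \omega_\ell \alpha_k$ for each $\ell$. Since $m$ is fixed and finite, summing these $m$ convergences in probability (an elementary consequence of Slutsky's theorem iterated $m-1$ times) gives
\[
\sum_{\ell=1}^m \hat{\omega}_\ell \hat{\alpha}_{n_\ell,j}^{(\ell)} \stackrel{\mathscr{P}}{\longrightarrow} \sum_{\ell=1}^m \omega_\ell \alpha_k = \alpha_k \sum_{\ell=1}^m \omega_\ell = \alpha_k,
\]
where the last equality uses the normalization $\sum_{\ell=1}^m \omega_\ell = 1$ built into the definition \cref{eq1}.

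There is no substantive obstacle here; the result is really a corollary of Theorem \ref{th1} once one notes that each local estimator is itself consistent. The only point that warrants care is checking that the choice of sign in \cref{5} and \cref{7} makes each $\hat{\alpha}_{n_\ell,j}^{(\ell)}$ asymptotically unbiased for the same $\alpha_k$ across machines; this is ensured by Assumption \ref{ass7}, which forces the sample spiked eigenvalues $\lambda_{n_\ell,j}^{(\ell)}$ to lie almost surely outside the bulk $[(1-\sqrt{y_\ell})^2,(1+\sqrt{y_\ell})^2]$ for large $n_\ell$, so that the square root in \cref{5}--\cref{7} is real and the branch selection used in the derivation of Theorem \ref{th5} is well defined on every machine.
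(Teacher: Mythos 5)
Your proposal is correct and follows essentially the same route as the paper: consistency of each local estimator $\hat{\alpha}_{n_\ell,j}^{(\ell)}$, consistency of the estimated weights from Theorem \ref{th1}, and Slutsky's theorem over the finite sum, with the normalization $\sum_{\ell=1}^m\omega_\ell=1$ closing the argument. The only cosmetic difference is that the paper writes the error as $\sum_\ell(\hat{\omega}_\ell-\omega_\ell)(\hat{\alpha}_{n_\ell,j}^{(\ell)}-\alpha_k)+\sum_\ell\omega_\ell(\hat{\alpha}_{n_\ell,j}^{(\ell)}-\alpha_k)$ and invokes the almost sure convergence $\hat{\alpha}_{n_\ell,j}^{(\ell)}\to\alpha_k$ directly, whereas you derive consistency in probability from the local CLT and multiply term by term.
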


\begin{theorem}[Asymptotic normality]
	\label{th3}
	 Given that Assumptions \ref{ass4}--\ref{ass8} hold. Under the assumption of the spiked population model in a distributed architecture, when $n_{\ell}\to\infty$, ${\ell}=1, \dots, m$, there are
	$$\sqrt{n}\left(\sum_{{\ell}=1}^{m}\hat{\omega}_{\ell}\hat{\alpha}_{n_{\ell},j}^{({\ell})}-\alpha_k\right)\stackrel{\mathscr{F}}{\longrightarrow} \mathcal{N}\left(0,\frac{n}{\sum_{\ell=1}^{m}n_{\ell}/\sigma_{\ell}^2}\right).$$ 
\end{theorem}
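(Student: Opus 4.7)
The plan is to decompose the weighted error around $\alpha_k$ using the fact that $\sum_{\ell=1}^m \hat{\omega}_\ell = 1$, reduce the random weights to the deterministic optimal weights by the consistency result in Theorem \ref{th1}, and then invoke the per-machine CLT in Theorem \ref{th5} together with independence across machines (Assumption \ref{ass5}) to assemble the limiting Gaussian.

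First I would rewrite
\begin{equation*}
\sqrt{n}\Bigl(\sum_{\ell=1}^m \hat{\omega}_\ell \hat{\alpha}_{n_\ell,j}^{(\ell)}-\alpha_k\Bigr)
= \sum_{\ell=1}^m \hat{\omega}_\ell\, \sqrt{n/n_\ell}\cdot\sqrt{n_\ell}\bigl(\hat{\alpha}_{n_\ell,j}^{(\ell)}-\alpha_k\bigr),
\end{equation*}
which is valid because $\sum_\ell \hat{\omega}_\ell=1$. Theorem \ref{th5}, applied on each machine, yields the marginal weak limit $\sqrt{n_\ell}(\hat{\alpha}_{n_\ell,j}^{(\ell)}-\alpha_k)\stackrel{\mathscr{F}}{\longrightarrow}\mathcal{N}(0,\sigma_\ell^2)$. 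Since the samples $\bm{X}^{(\ell)}$ are independent by Assumption \ref{ass5}, the vector $(\sqrt{n_\ell}(\hat{\alpha}_{n_\ell,j}^{(\ell)}-\alpha_k))_{\ell=1}^m$ converges jointly to a $\mathcal{N}(0,\operatorname{diag}(\sigma_1^2,\ldots,\sigma_m^2))$ random vector; this is the key input because $m$ is fixed and marginal convergence plus independence of the underlying data gives joint convergence by the continuous mapping theorem (or by checking characteristic functions factor in the limit).

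Next I would replace the estimated weights $\hat{\omega}_\ell$ by the deterministic $\omega_\ell$ using Theorem \ref{th1}, which gives $\hat{\omega}_\ell\stackrel{\mathscr{P}}{\longrightarrow}\omega_\ell$ for each $\ell$. Writing
\begin{equation*}
\sum_{\ell=1}^m \hat{\omega}_\ell\sqrt{n/n_\ell}\,\sqrt{n_\ell}\bigl(\hat{\alpha}_{n_\ell,j}^{(\ell)}-\alpha_k\bigr)
= \sum_{\ell=1}^m \omega_\ell\sqrt{n/n_\ell}\,\sqrt{n_\ell}\bigl(\hat{\alpha}_{n_\ell,j}^{(\ell)}-\alpha_k\bigr)+R_n,
\end{equation*}
the remainder $R_n=\sum_\ell(\hat{\omega}_\ell-\omega_\ell)\sqrt{n/n_\ell}\,\sqrt{n_\ell}(\hat{\alpha}_{n_\ell,j}^{(\ell)}-\alpha_k)$ is $o_P(1)\cdot O_P(1)=o_P(1)$ by Slutsky, provided that $n/n_\ell$ stays bounded as $n_\ell\to\infty$; under Assumption \ref{ass6} with fixed $m$ and $p/n_\ell\to y_\ell\in(0,1)$, the ratios $n/n_\ell$ are indeed bounded since $n/n_\ell=\sum_i n_i/n_\ell\to \sum_i y_\ell/y_i$. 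Applying the continuous mapping theorem to the linear functional $(z_1,\ldots,z_m)\mapsto \sum_\ell \omega_\ell\sqrt{n/n_\ell}\,z_\ell$ on the joint Gaussian limit obtained above gives a centered Gaussian with variance $\sum_\ell \omega_\ell^2 (n/n_\ell)\sigma_\ell^2$.

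It remains to simplify this variance. Substituting $\omega_\ell=(n_\ell/\sigma_\ell^2)/\sum_i(n_i/\sigma_i^2)$ from Proposition \ref{pro1},
\begin{equation*}
\sum_{\ell=1}^m \omega_\ell^2 \frac{n}{n_\ell}\sigma_\ell^2
= \frac{n}{\bigl(\sum_i n_i/\sigma_i^2\bigr)^2}\sum_{\ell=1}^m \frac{n_\ell^2/\sigma_\ell^4}{n_\ell}\sigma_\ell^2
= \frac{n}{\sum_{\ell=1}^m n_\ell/\sigma_\ell^2},
\end{equation*}
which matches the claim. The main obstacle I expect is the careful control of $R_n$: one must keep track of the scaling $\sqrt{n/n_\ell}$ when amplifying the machine-level fluctuations to the global scale, and use boundedness of these ratios together with the consistency of the plug-in weights. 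Beyond that, the argument is essentially a Slutsky-plus-independence assembly built on the single-machine CLT already established in Theorem \ref{th5}.
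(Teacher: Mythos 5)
Your proposal is correct and follows essentially the same route as the paper's proof: the identical decomposition into a $\sum_\ell \omega_\ell\sqrt{n/n_\ell}\,Z_{n_\ell}$ main term plus a $\sum_\ell(\hat{\omega}_\ell-\omega_\ell)\sqrt{n/n_\ell}\,Z_{n_\ell}$ remainder killed by Slutsky, with the per-machine CLT of Theorem \ref{th5} supplying the Gaussian limits. You are in fact slightly more careful than the paper in spelling out the joint convergence across machines via independence, the boundedness of the ratios $n/n_\ell$, and the algebra reducing $\sum_\ell\omega_\ell^2(n/n_\ell)\sigma_\ell^2$ to $n/\sum_\ell n_\ell/\sigma_\ell^2$.
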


In this section, we derived the final form of our weighted estimator along with its two associated statistical properties. It remains to be seen whether the weighted estimator can achieve a statistical error comparable to that of the full sample, potentially with minimal or no loss. This inquiry will be addressed in the following section.

\section{Statistical error analysis\label{sec:4}}

In this section, we analyze the statistical errors of the aforementioned weighted estimators and juxtapose them with the statistical errors of the full sample. Our assessment of statistical errors follows the $\psi_2$-parameter, defined based on sub-Gaussian random variables. Generally speaking, the sub-Gaussian condition is often assumed in the related literature and slightly weaker than the standard normality assumption. For more on sub-Gaussian, please refer to \cite{vershynin2010introduction} for details.

\begin{definition}[Sub-Gaussian Random Variable]
	 The random variable $\bm{X} \in \mathbb{R}$ is called a sub-Gaussian random variable if there exists $C>0$ such that $\left(\mathbb{E}\vert\bm{X}\vert^r\right)^{1/r} \le C\sqrt{r}$ holds. The sub-gaussian norm of $\bm{X}$, denoted $$\Vert\bm{X}\Vert_{\psi_2}=\sup_{r\ge1}r^{-1/2}(\mathbb{E}\vert\bm{X}\vert^r)^{1/r}.$$ 
\end{definition}

\begin{lemma}[Lemma 5.9 in \cite{vershynin2010introduction},Rotation invariance]
	\label{lem1}
	 Consider a finite number of an independent centered sub-gaussian random variable $\bm{X}_i$. Then $\sum_{i} \bm{X}_i $ is also a centered sub-gaussian random variable. Moreover, 
	$$
	\Vert \sum_{i}\bm{X}_i\Vert_{\psi_2}^2\le C \sum_{i}\Vert\bm{X}_i\Vert_{\psi_2}^2
	$$
	where $C$ is an absolute constant.
\end{lemma}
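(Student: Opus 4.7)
The plan is to prove this by passing through the moment generating function (MGF), since the $\psi_2$-norm admits several well-known equivalent characterizations and the MGF characterization behaves multiplicatively under independence. Concretely, the equivalence I would invoke is: a centered random variable $X$ is sub-Gaussian if and only if there exists a constant $K$ such that $\mathbb{E}\exp(\lambda X)\le \exp(c\lambda^2 K^2)$ for all $\lambda\in\mathbb{R}$, with $K$ comparable (up to absolute constants) to $\|X\|_{\psi_2}$. Once this equivalence is in hand, the lemma essentially reduces to multiplying MGFs.

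The first step is therefore to establish the forward direction of the equivalence. I would expand $\mathbb{E}\exp(\lambda X)$ in its Taylor series, use the definition $\|X\|_{\psi_2}=\sup_{r\ge1}r^{-1/2}(\mathbb{E}|X|^r)^{1/r}$ to bound $\mathbb{E}|X|^r\le r^{r/2}\|X\|_{\psi_2}^r$, exploit $\mathbb{E}X=0$ to drop the linear term, and then use Stirling and the bound $r^{r/2}/r!\le (2e)^{r/2}/(r/2)!$ for even terms to collapse the series into $\exp(c\lambda^2\|X\|_{\psi_2}^2)$ for some absolute $c$. For odd terms a Cauchy–Schwarz sandwiching via adjacent even moments handles the sign issue. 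This is the most arithmetic-heavy portion, but it is standard.

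The second step uses independence. Writing $S=\sum_i X_i$, the MGF factorizes as $\mathbb{E}\exp(\lambda S)=\prod_i \mathbb{E}\exp(\lambda X_i)\le \prod_i \exp(c\lambda^2 \|X_i\|_{\psi_2}^2)=\exp\bigl(c\lambda^2 \sum_i \|X_i\|_{\psi_2}^2\bigr)$, which is precisely the MGF characterization of sub-Gaussianity for $S$ with parameter $K^2=\sum_i \|X_i\|_{\psi_2}^2$ (up to an absolute constant). Centeredness of $S$ is immediate from linearity of expectation. It then remains to convert the MGF bound back into a $\psi_2$-bound: expanding $\mathbb{E}|S|^r$ via $|s|^r\le r!\,e^{|s|}/1$ (or, more cleanly, via the tail bound $\mathbb{P}(|S|>t)\le 2\exp(-t^2/(4cK^2))$ obtained from the Chernoff method applied to the MGF estimate) yields $(\mathbb{E}|S|^r)^{1/r}\le C'\sqrt{r}\,K$, which gives $\|S\|_{\psi_2}^2\le C\sum_i \|X_i\|_{\psi_2}^2$ as desired.

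I expect the principal obstacle to be bookkeeping of absolute constants rather than any real conceptual difficulty: the three steps (moments-to-MGF, MGF multiplication, MGF-to-moments) each introduce a universal constant, and one has to be careful that none of these depend on $X_i$, $\|X_i\|_{\psi_2}$, or the number of summands. Since the paper cites this statement directly as Lemma 5.9 of \cite{vershynin2010introduction}, in practice I would simply invoke that reference, but the sketch above is the route I would take if a self-contained argument were required.
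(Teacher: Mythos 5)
The paper offers no proof of this lemma, importing it directly as Lemma 5.9 of the cited Vershynin lecture notes, and your sketch reproduces exactly the argument given there: pass from the moment definition of $\Vert\cdot\Vert_{\psi_2}$ to an MGF bound $\mathbb{E}\exp(\lambda X_i)\le\exp(c\lambda^2\Vert X_i\Vert_{\psi_2}^2)$, multiply the MGFs using independence, and convert back to a moment bound via Chernoff tails. Your proposal is correct and takes essentially the same route as the source the paper relies on.
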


Under the definition of the $\psi_2$-paradigm, we can derive statistical error results for the distributed weighted estimators.

\begin{theorem}\label{th4}
	Given that Assumptions \ref{ass4}--\ref{ass8} hold. As $n_{\ell}\to\infty$, $\ell=1, \dots, m$, there is constants $C$ such that
	$$\Vert\sum_{\ell=1}^{m}\hat{\omega}_{\ell}\hat{\alpha}_{n_{\ell},j}^{(\ell)}-\alpha_k\Vert_{\psi_2}\le \frac{C}{\sqrt{\sum_{\ell=1}^{m}n_{\ell}/\sigma_{\ell}^2}}.$$
\end{theorem}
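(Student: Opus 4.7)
The target bound should be viewed as a non-asymptotic, $\psi_2$-refinement of the CLT in Theorem \ref{th3}: it says that the limiting standard deviation $\bigl(\sum_\ell n_\ell/\sigma_\ell^2\bigr)^{-1/2}$ also controls the sub-Gaussian fluctuations of the finite-sample estimator. The plan is to first pass to the oracle (deterministic) weight $\omega_\ell$ via the decomposition
$$\sum_{\ell=1}^{m}\hat{\omega}_\ell \hat{\alpha}_{n_\ell,j}^{(\ell)} - \alpha_k = \sum_{\ell=1}^{m}\omega_\ell\bigl(\hat{\alpha}_{n_\ell,j}^{(\ell)} - \alpha_k\bigr) + \sum_{\ell=1}^{m}(\hat{\omega}_\ell-\omega_\ell)\bigl(\hat{\alpha}_{n_\ell,j}^{(\ell)} - \alpha_k\bigr),$$
which is legitimate because both $\{\omega_\ell\}$ and $\{\hat{\omega}_\ell\}$ sum to one (Proposition \ref{pro1} and Proposition \ref{pro2}). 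The first sum is the main oracle term; the second is a lower-order cross term to be absorbed.

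For the oracle term, Assumption \ref{ass5} makes the machine-wise errors $\hat{\alpha}_{n_\ell,j}^{(\ell)} - \alpha_k$ mutually independent, and each is asymptotically centered. Combining Theorem \ref{th5} with standard concentration for spiked sample covariance eigenvalues under sub-Gaussian entries, I would establish the non-asymptotic bound $\|\hat{\alpha}_{n_\ell,j}^{(\ell)} - \alpha_k\|_{\psi_2} \le C_0\sigma_\ell/\sqrt{n_\ell}$ (a $\psi_2$ strengthening of the marginal CLT, valid by the $\psi_2$–moment equivalence once Gaussian-type tails are in hand). Applying Lemma \ref{lem1} to the independent centered summands $\omega_\ell(\hat{\alpha}_{n_\ell,j}^{(\ell)}-\alpha_k)$ yields
$$\Bigl\|\sum_{\ell=1}^{m}\omega_\ell\bigl(\hat{\alpha}_{n_\ell,j}^{(\ell)}-\alpha_k\bigr)\Bigr\|_{\psi_2}^{2} \le C\sum_{\ell=1}^{m}\omega_\ell^{2}\,\|\hat{\alpha}_{n_\ell,j}^{(\ell)}-\alpha_k\|_{\psi_2}^{2} \le CC_0^{2}\sum_{\ell=1}^{m}\frac{\omega_\ell^{2}\sigma_\ell^{2}}{n_\ell}.$$
Substituting the optimal weight $\omega_\ell = (n_\ell/\sigma_\ell^2)/\sum_i(n_i/\sigma_i^2)$ collapses the right-hand side into $CC_0^{2}/\sum_\ell(n_\ell/\sigma_\ell^2)$, which is exactly the rate claimed in Theorem \ref{th4}.

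For the cross term, Theorem \ref{th1} gives $\hat{\omega}_\ell \stackrel{\mathscr{P}}{\longrightarrow} \omega_\ell$ while $|\hat{\omega}_\ell|, |\omega_\ell| \le 1$, so one can truncate onto a high-probability event $\mathcal{E}_n$ on which $\max_\ell|\hat{\omega}_\ell - \omega_\ell| = o(1)$, pull the factor $\max_\ell|\hat{\omega}_\ell - \omega_\ell|$ outside the $\psi_2$-norm, and then apply Lemma \ref{lem1} to the remaining independent centered local errors as in the oracle step. The resulting contribution is of strictly smaller order than $1/\sqrt{\sum_\ell n_\ell/\sigma_\ell^2}$ and is absorbed into the constant $C$ in the final bound.

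The main obstacle is precisely the cross-term step: $\hat{\omega}_\ell$ is built from machine-local quantities (via $\hat{\gamma}_4^{(\ell)}$, $\widehat{\sum_t u^{(\ell)4}_{kt}}$, and the initial $\bar{\alpha}_k$), hence is \emph{not} independent of $\hat{\alpha}_{n_\ell,j}^{(\ell)}$, so the clean rotation-invariance argument of Lemma \ref{lem1} does not apply directly. The cleanest fix is a conditioning argument on the sigma-algebra generated by the estimators used to form the weights, or else the truncation-plus-uniform-control strategy sketched above; either way, care is needed to keep the resulting constants absolute. A secondary, more standard technical input is the non-asymptotic sub-Gaussian bound $\|\hat{\alpha}_{n_\ell,j}^{(\ell)}-\alpha_k\|_{\psi_2}\le C_0\sigma_\ell/\sqrt{n_\ell}$, which strengthens the distributional statement of Theorem \ref{th5} and must be derived from spike-eigenvalue concentration under sub-Gaussian data.
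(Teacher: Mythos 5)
Your approach is genuinely different from the paper's, and there is a real gap in it.

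The paper proves Theorem~\ref{th4} top-down: it starts from the distributional limit in Theorem~\ref{th3}, sets $Y_n=\sqrt{n}\bigl(\sum_\ell\hat\omega_\ell\hat\alpha_{n_\ell,j}^{(\ell)}-\alpha_k\bigr)$ and $Y\sim\mathcal N\bigl(0,\,n/\sum_\ell n_\ell/\sigma_\ell^2\bigr)$, bounds $\Vert Y\Vert_{\psi_2}$ directly because $Y$ is Gaussian, and then transfers this bound back to $Y_n$ via moment convergence (Helly--Bray), writing $\Vert\cdot\Vert_{\psi_2}=\frac{1}{\sqrt n}\sup_{r\ge1}r^{-1/2}(\mathbb E|Y_n|^r)^{1/r}$ and arguing each $r$-term converges. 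Your proof instead is bottom-up: you postulate a per-machine $\psi_2$ bound $\Vert\hat\alpha_{n_\ell,j}^{(\ell)}-\alpha_k\Vert_{\psi_2}\le C_0\sigma_\ell/\sqrt{n_\ell}$ and aggregate across independent machines via Lemma~\ref{lem1}, then control the cross term $\sum_\ell(\hat\omega_\ell-\omega_\ell)(\hat\alpha_{n_\ell,j}^{(\ell)}-\alpha_k)$ by truncation/conditioning. The aggregation step and the correct identification of the dependence obstacle in the cross term are both sound; if the per-machine bound were available, your route would in fact be cleaner and more robust than the paper's (which silently needs uniform integrability to justify Helly--Bray for moments, and an interchange of $\sup_{r\ge1}$ with $n\to\infty$).

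The gap is the asserted per-machine bound $\Vert\hat\alpha_{n_\ell,j}^{(\ell)}-\alpha_k\Vert_{\psi_2}\le C_0\sigma_\ell/\sqrt{n_\ell}$. You derive it by appealing to ``standard concentration for spiked sample covariance eigenvalues under sub-Gaussian entries,'' but Assumption~\ref{ass2} only imposes a finite fourth moment on $x_{ij}$, and the discussion in Section~\ref{sec:6} explicitly advertises that the results \emph{accommodate data with heavier tails than sub-Gaussian}. So the concentration input you are importing requires a strictly stronger hypothesis than the paper makes; under the paper's actual assumptions, $\hat\alpha_{n_\ell,j}^{(\ell)}-\alpha_k$ need not be a sub-Gaussian random variable at any fixed $n_\ell$, and the claimed $\psi_2$ bound is not available. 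This is precisely why the paper does not take the Lemma~\ref{lem1} route and instead argues through the asymptotic Gaussian limit $Y$: sub-Gaussianity only enters in the limit, not at finite $n$. To repair your argument you would either need to add a sub-Gaussian data assumption (changing the theorem's hypotheses), or establish the per-machine $\psi_2$ bound from scratch under the fourth-moment assumption alone, which is nontrivial and is not what the paper does.
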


Naturally, we only take m=1 to get the statistical error obtained by statistical analysis of the full sample data. We denote the estimator for the full sample by $\hat{\alpha}_{n,j}$, then
$$\Vert\hat{\alpha}_{n,j}-\alpha_k\Vert_{\psi_2}\le \frac{C\sigma_{full}}{\sqrt{n}},$$
where $C$ is an absolute constant, $$\sigma_{full}^2=\left(\gamma_4-3\right)\alpha_k^2\sum_{t=1}^{p}u_{kt}^4+\frac{2\alpha_k^2\left(\alpha_k-1\right)^2}{\left(\alpha_k-1\right)^2-y},$$ 
and $n=\sum_{\ell=1}^{m}$, $p/n\to y\in\left(0,1\right)$.

\begin{rem}
	Here we can see when $m=1$, the statistical error rate of order that $\hat{\alpha}_{n,j}$ can be achieved $1/\sqrt{n}$, while at $m>1$, it is clear that the statistical error rate of order for $\sum_{\ell=1}^{m}\hat{\omega}_{\ell}\hat{\alpha}_{n_{\ell},j}^{(\ell)}$ also reaches $1/\sqrt{n}$ because $\sigma_{\ell}^2$ is bounded. This shows that our weighted estimator has the same statistical properties as the full sample estimator. 
\end{rem}

For ease of understanding, we give the following special example. We consider a particular situation from \cite{paul2007asymptotics}.

\begin{example}
	\label{ex1}	
	 Assume that the variables $x_{ij}^{\ell}$ are real Gaussian, and $\bm{\Sigma}$ diagonal whose eigenvalues are all simple. We can determine that the variance of the Gaussian random variable on each machine is 
	$$\sigma_{\ell}^2=\frac{2\alpha_k^2\left(\alpha_k-1\right)^2}{\left(\alpha_k-1\right)^2-y_{\ell}},$$
	and the asymptotically optimal weight is
	\begin{equation}
		\omega_{\ell}=\frac{n_{\ell}(\alpha_k-1)^2-p}{n(\alpha_k-1)^2-mp}.
		\label{8}	
	\end{equation}
	We require a function $h\left(\hat{\alpha}_{n_{\ell},j}^{(1)}, \hat{\alpha}_{n_{\ell},j}^{(2)}, \dots, \hat{\alpha}_{n_{\ell},j}^{(m)}\right)$  to estimate the $\alpha_k$. In this instance, we can set $\bar{\alpha}_k=\frac{1}{m}\sum_{{\ell}=1}^{m}\hat{\alpha}_{n_{\ell},j}^{({\ell})}$. Then, we have	
	$$\sqrt{n}\left(\sum_{{\ell}=1}^{m}\hat{\omega}_{\ell}\hat{\alpha}_{n_{\ell},j}^{({\ell})}-\alpha_k\right)\stackrel{\mathscr{F}}{\longrightarrow} \mathcal{N}\left(0,\frac{2\alpha_k^2(\alpha_k-1)^2}{(\alpha_k-1)^2-my}\right).$$

    And our statistical error result can be expressed as the following equation:
	$$\Vert\sum_{\ell=1}^{m}\hat{\omega}_{\ell}\hat{\alpha}_{n_{\ell},j}^{(\ell)}-\alpha_k\Vert_{\psi_2}\le \frac{C\alpha_k\sqrt{(\alpha_k-1)^2}}{\sqrt{n(\alpha_k-1)^2-mp}}.$$
	for $k\in\left\{1,\dots, M_b\right\}$, $j=k$ and $k\in\left\{M-M_a+1,\dots, M\right\}$, $j=p-M+k$. As for the statistical error for the full sample, we take $m=1$ and get the following equation:  
	
	$$ \Vert\hat{\alpha}_{n,j}-\alpha_k\Vert_{\psi_2}\le \frac{C\alpha_k\sqrt{(\alpha_k-1)^2}}{\sqrt{n(\alpha_k-1)^2-p}}.$$
\end{example}

In this example, it becomes evident that for both cases, where $m=1$ and $m>1$, representing the full-sample estimator and the weighted estimator respectively, they both exhibit a statistical error rate of order $1/\sqrt{n}$. To facilitate practical application, we provide Algorithm \ref{al1}.

\begin{algorithm}[t] 
	\caption{Distributed weighted average estimator} 
	\hspace*{0.02in} {\bf Input:} 
	input function $h\left(\hat{\alpha}_{n_{1},j}^{(1)}, \hat{\alpha}_{n_{2},j}^{(2)}, \dots, \hat{\alpha}_{n_{m},j}^{(m)}\right).$\\
	
	\begin{algorithmic}[1]
		\State On each machine, the covariance matrix of $\bm{S}_{n_{\ell}}^{(\ell)}$ are calculated using \cref{1} 
		\For{i=1:m} 
		\State Calculate the eigenvalues of $\bm{S}_{n_{\ell}}^{(\ell)}$, $\lambda_{n_{\ell},j}$.  
		\State Calculate $\hat{\alpha}_{n_{\ell},j}^{(\ell)}$ by \cref{5} or \cref{7}. 
		\State Send $n_{\ell}$ and $\hat{\alpha}_{n_{\ell},j}^{(\ell)}$ to the central server.
		\EndFor
		\State On the central machine, calculate the initial value
		$\bar{\alpha}_k=h\left(\hat{\alpha}_{n_{1},j}^{(1)}, \hat{\alpha}_{n_{2},j}^{(2)}, \dots, \hat{\alpha}_{n_{m},j}^{(m)}\right)$. Then
		$$\hat{\omega}_{\ell}=\frac{n_{\ell}(\bar{\alpha}_k-1)^2-p}{n(\bar{\alpha}_k-1)^2-mp}.$$	
		\State Calculate the final weighted estimate:
		$$\tilde{\alpha}_k=\sum_{\ell=1}^{m}\hat{\omega}_{\ell}\hat{\alpha}_{n_{\ell},j}^{(\ell)}.$$
	\end{algorithmic}		
	\hspace*{0.02in} {\bf Output:} 
	$\tilde{\alpha}_k.$
	\label{al1}
\end{algorithm}

\begin{rem}
	
As demonstrated by our Algorithm \ref{al1}, we can attain a transfer efficiency of $O(m)$. In contrast, employing the conventional approach of transferring the entire sample covariance matrix would result in a transfer efficiency of $O(mp^2)$. Therefore, our approach signifies a substantial reduction in communication costs.
	
\end{rem}

In the above section, we have analyzed some statistical properties as well as statistical errors of distributed weighted estimators from a theoretical point of view, and next, we will further illustrate the feasibility of weighted estimators from the point of view of simulation experiments.

\section{Simulation studies and empirical analysis\label{sec:5}}

In this section, we conduct a simulation study to demonstrate the efficacy of our distributed spiked eigenvalues estimator. We adopt a specific scenario outlined in \cite{paul2007asymptotics}. We assume the presence of only one spiked eigenvalue in the model, either with $\alpha_1=10$ (representing the largest eigenvalue), or with $\alpha_M=0.01$ (representing the smallest eigenvalue).

\subsection{Statistical error}

Table \ref{tab1} and Table \ref{tab2} present simulations involving spiked eigenvalues of 10 and 0.01, respectively. The reported errors in these tables correspond to the mean square error. The dimensionality is varied for 100, 200, and 300 cases, and the machine number ranges from 50 to 300 at intervals of 6. It is important to highlight that in our methodology, we do not impose a restriction on the number of samples per machine. The sample size on each machine is stochastically generated according to specific rules, resulting in varying sample sizes for each simulation. However, we ensure that both the mean squared error of our statistics and the statistics from the complete sample are computed using the same set of samples. This approach aligns with real-world scenarios where the sample sizes on individual machines tend to differ, lending a practical dimension to our study.

\begin{table}[!ht]
	
	\centering
	
	\caption{The 1,000 mean squared errors in different dimensions and machine numbers, $\alpha_1=10$ (largest eigenvalues). }
	\label{tab1}
	
	\begin{tabular}{c|c|c|c|c|c|c|c|c|c}\hline
		
		\hline
		\diagbox{m}{p}&\multicolumn{3}{c|}{100} & \multicolumn{3}{c|}{200} & \multicolumn{3}{c}{300} \\ 
		\hline
		& \multicolumn{1}{c}{Pooled}& \multicolumn{1}{c}{Weight}& \multicolumn{1}{c|}{Avg}& \multicolumn{1}{c}{Pooled}& \multicolumn{1}{c}{Weight}& \multicolumn{1}{c|}{Avg}& \multicolumn{1}{c}{Pooled}& \multicolumn{1}{c}{Weight}& \multicolumn{1}{c}{Avg}\\ \hline
		\multicolumn{1}{c|}{50}&\multicolumn{1}{c}{6.8317}&\multicolumn{1}{c}{6.847}&\multicolumn{1}{c|}{8.5632}&\multicolumn{1}{c}{5.6439}&\multicolumn{1}{c}{5.6909}&\multicolumn{1}{c|}{6.4932}&\multicolumn{1}{c}{4.2236}&\multicolumn{1}{c}{4.2229}&\multicolumn{1}{c}{4.5984}\\
		\multicolumn{1}{c|}{100}&\multicolumn{1}{c}{3.5407}&\multicolumn{1}{c}{3.5546}&\multicolumn{1}{c|}{4.5878}&\multicolumn{1}{c}{2.9519}&\multicolumn{1}{c}{2.9496}&\multicolumn{1}{c|}{3.2375}&\multicolumn{1}{c}{2.3022}&\multicolumn{1}{c}{2.3161}&\multicolumn{1}{c}{2.4601}\\
		\multicolumn{1}{c|}{150}&\multicolumn{1}{c}{2.4069}&\multicolumn{1}{c}{2.4147}&\multicolumn{1}{c|}{3.2305}&\multicolumn{1}{c}{1.8832}&\multicolumn{1}{c}{1.8943}&\multicolumn{1}{c|}{2.0942}&\multicolumn{1}{c}{1.6679}&\multicolumn{1}{c}{1.6826}&\multicolumn{1}{c}{1.8162}\\
		\multicolumn{1}{c|}{200}&\multicolumn{1}{c}{1.8219}&\multicolumn{1}{c}{1.8267}&\multicolumn{1}{c|}{2.3159}&\multicolumn{1}{c}{1.4424}&\multicolumn{1}{c}{1.4591}&\multicolumn{1}{c|}{1.6973}&\multicolumn{1}{c}{1.1174}&\multicolumn{1}{c}{1.1203}&\multicolumn{1}{c}{1.2131}\\
		\multicolumn{1}{c|}{250}&\multicolumn{1}{c}{1.4299}&\multicolumn{1}{c}{1.4274}&\multicolumn{1}{c|}{1.8654}&\multicolumn{1}{c}{1.1519}&\multicolumn{1}{c}{1.1604}&\multicolumn{1}{c|}{1.2993}&\multicolumn{1}{c}{0.9642}&\multicolumn{1}{c}{0.9708}&\multicolumn{1}{c}{1.052}\\
		\multicolumn{1}{c|}{300}&\multicolumn{1}{c}{1.2867}&\multicolumn{1}{c}{1.2988}&\multicolumn{1}{c|}{1.6278}&\multicolumn{1}{c}{0.9084}&\multicolumn{1}{c}{0.9125}&\multicolumn{1}{c|}{1.0463}&\multicolumn{1}{c}{0.7578}&\multicolumn{1}{c}{0.7661}&\multicolumn{1}{c}{0.8289}\\
		\hline \hline 
	\end{tabular}	
	\begin{tablenotes}
		\footnotesize
		\centering
		\item Note: The error values in Table \ref{tab1} are multiplied by $10^{-3}$. 
	\end{tablenotes}

\end{table}

\begin{table}[!ht]
	
	\centering
	
	\caption{The 1,000 mean squared errors in different dimensions and machine numbers, $\alpha_M=0.01$ (smallest eigenvalues). }
	\label{tab2}

	\begin{tabular}{c|c|c|c|c|c|c|c|c|c}\hline
		
		\hline
		\diagbox{m}{p}&\multicolumn{3}{c|}{100} & \multicolumn{3}{c|}{200} & \multicolumn{3}{c}{300} \\ 
		\hline
		& \multicolumn{1}{c}{Pooled}& \multicolumn{1}{c}{Weight}& \multicolumn{1}{c|}{Avg}& \multicolumn{1}{c}{Pooled}& \multicolumn{1}{c}{Weight}& \multicolumn{1}{c|}{Avg}& \multicolumn{1}{c}{Pooled}& \multicolumn{1}{c}{Weight}& \multicolumn{1}{c}{Avg}\\ \hline
		\multicolumn{1}{c|}{50}&\multicolumn{1}{c}{7.3351}&\multicolumn{1}{c}{9.3872}&\multicolumn{1}{c|}{15.053}&\multicolumn{1}{c}{5.9349}&\multicolumn{1}{c}{8.5021}&\multicolumn{1}{c|}{12.269}&\multicolumn{1}{c}{4.861}&\multicolumn{1}{c}{7.814}&\multicolumn{1}{c}{9.9585}\\
		\multicolumn{1}{c|}{100}&\multicolumn{1}{c}{3.9825}&\multicolumn{1}{c}{5.1609}&\multicolumn{1}{c|}{8.5341}&\multicolumn{1}{c}{2.9143}&\multicolumn{1}{c}{4.6961}&\multicolumn{1}{c|}{6.6514}&\multicolumn{1}{c}{2.4175}&\multicolumn{1}{c}{3.9242}&\multicolumn{1}{c}{4.921}\\
		\multicolumn{1}{c|}{150}&\multicolumn{1}{c}{2.4809}&\multicolumn{1}{c}{3.3616}&\multicolumn{1}{c|}{6.0184}&\multicolumn{1}{c}{1.9868}&\multicolumn{1}{c}{3.277}&\multicolumn{1}{c|}{4.7696}&\multicolumn{1}{c}{1.5963}&\multicolumn{1}{c}{2.7296}&\multicolumn{1}{c}{3.5621}\\
		\multicolumn{1}{c|}{200}&\multicolumn{1}{c}{1.7882}&\multicolumn{1}{c}{2.7705}&\multicolumn{1}{c|}{5.0585}&\multicolumn{1}{c}{1.428}&\multicolumn{1}{c}{2.3824}&\multicolumn{1}{c|}{3.4623}&\multicolumn{1}{c}{1.1311}&\multicolumn{1}{c}{2.1085}&\multicolumn{1}{c}{2.9557}\\
		\multicolumn{1}{c|}{250}&\multicolumn{1}{c}{1.3775}&\multicolumn{1}{c}{2.2973}&\multicolumn{1}{c|}{4.3883}&\multicolumn{1}{c}{1.1605}&\multicolumn{1}{c}{2.0506}&\multicolumn{1}{c|}{2.9741}&\multicolumn{1}{c}{0.8849}&\multicolumn{1}{c}{1.7931}&\multicolumn{1}{c}{2.3508}\\
		\multicolumn{1}{c|}{300}&\multicolumn{1}{c}{1.1395}&\multicolumn{1}{c}{1.96}&\multicolumn{1}{c|}{3.7089}&\multicolumn{1}{c}{0.9397}&\multicolumn{1}{c}{1.7149}&\multicolumn{1}{c|}{2.5958}&\multicolumn{1}{c}{0.7821}&\multicolumn{1}{c}{1.5458}&\multicolumn{1}{c}{2.1578}\\
		\hline \hline 
	\end{tabular}
	\begin{tablenotes}
		\footnotesize
		\centering
		\item Note: The error values in Table \ref{tab2} are multiplied by $10^{-9}$. 
	\end{tablenotes}

\end{table}

\begin{rem}
	Here, we use "Pooled" for the full sample estimate, "Weight" for our weighted estimate, and "Avg" for the mean estimate (i.e., the weights are taken directly as 1/m).
\end{rem}

From Table \ref{tab1} and Table \ref{tab2}, we observe that our weighted estimates for $\alpha_k$ are slightly less accurate than the full sample estimates, but this discrepancy diminishes as both the number of dimensions and machines increase. It is worth noting that the errors are on the order of $10^{-3}$ for the larger spiked eigenvalues and around $10^{-9}$ for the smaller ones, consistent with the findings in our Theorem \ref{th4}.

Next, we will examine the impact of dimensionality and the number of machines on the estimation error.

\subsection{Effect of machine number}

\begin{figure}[htbp]
	\centering
	\subfigure[p=100]{
		\begin{minipage}[t]{0.3\linewidth}
			\includegraphics[scale=0.4]{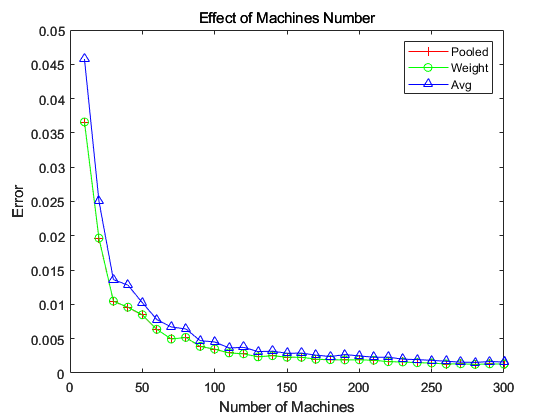}
		\end{minipage}
		\label{penG1}
	}
	\subfigure[p=200]{
		\begin{minipage}[t]{0.3\linewidth}
			\includegraphics[scale=0.4]{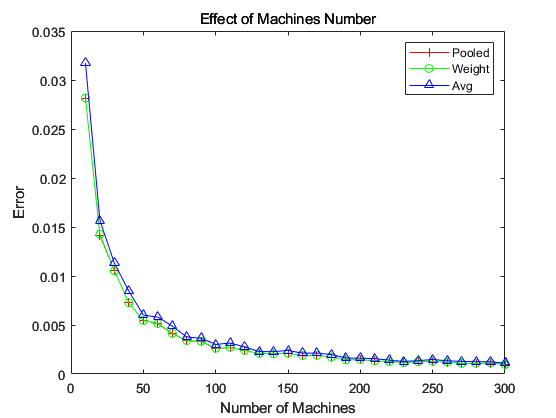}
		\end{minipage}
		\label{penG2}
	}
	\subfigure[p=300]{
		\begin{minipage}[t]{0.3\linewidth}
			\includegraphics[scale=0.4]{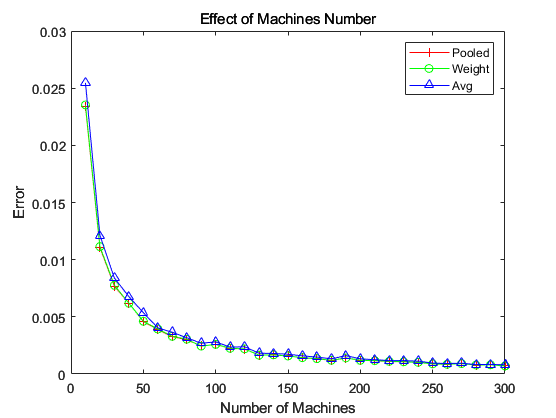}
		\end{minipage}
		\label{penG3}
	}
	\caption{The effect of machine number on the estimation of  largest spiked eigenvalues.}
	\label{key1}
\end{figure}

\begin{figure}[htbp]
	\centering
	\subfigure[p=100]{
		\begin{minipage}[t]{0.3\linewidth}
			\includegraphics[scale=0.4]{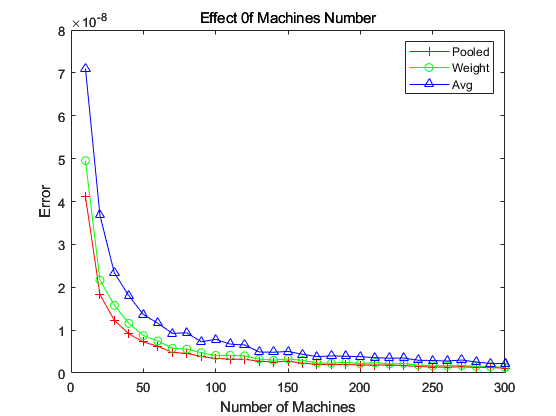}
		\end{minipage}
		\label{penG4}
	}
	\subfigure[p=200]{
		\begin{minipage}[t]{0.3\linewidth}
			\includegraphics[scale=0.4]{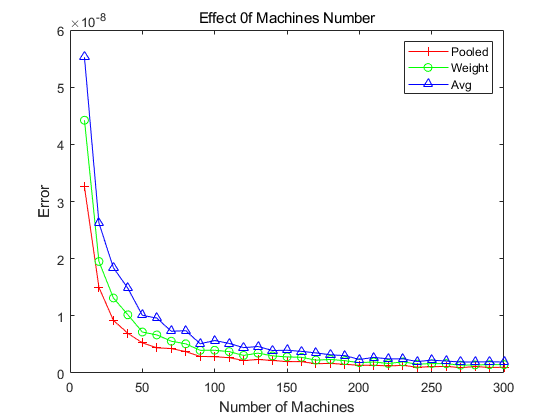}
		\end{minipage}
		\label{penG5}
	}
	\subfigure[p=300]{
		\begin{minipage}[t]{0.3\linewidth}
			\includegraphics[scale=0.4]{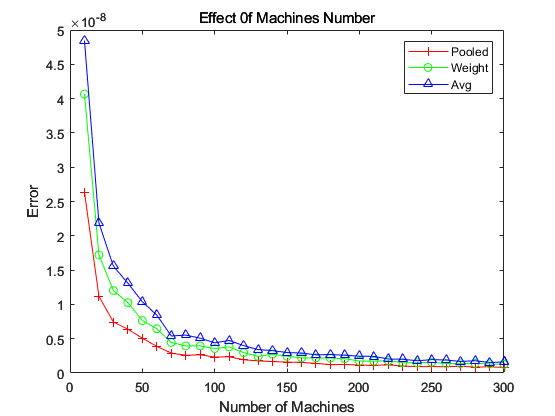}
		\end{minipage}
		\label{penG6}
	}
	\caption{The effect of machine number on the estimation of  smallest spiked eigenvalues.}
	\label{key2}
\end{figure}

In this subsection, we provide a more intuitive analysis of how the number of machines affects the accuracy of the three estimators, with fixed dimensions of 100, 200, and 300. Figure \ref{key1} illustrates the variation of the largest spiked eigenvalues with the number of machines under constant dimensionality. Notably, the curves of the Weighted-estimate and the Pooled-estimate closely align and consistently fall below the curve of the Average-estimate. Figure \ref{key2} depicts the variation of the smallest spiked eigenvalues with machine numbers in a fixed dimension, which complements the data in our table. Both plots demonstrate that the error in all three estimators diminishes as the number of machines increases. Although the outcomes are similar, their underlying principles differ. As the number of machines grows, the total sample size expands, which is the driving force behind the reduction in Pooled-estimate error. We ensure that the number of samples on each machine surpasses the number of dimensions, a prerequisite for our theorem to hold. Consequently, the error in both the weighted and mean estimates diminishes with increasing machine count. In contrast to traditional distributed algorithms, we advocate having as many machines as necessary to meet our conditions, as this approach yields superior estimation.

\subsection{Effect of Dimensionality }

\begin{figure}
	\centering
	\subfigure[m=100]{
		\begin{minipage}[t]{0.3\linewidth}
			\includegraphics[scale=0.4]{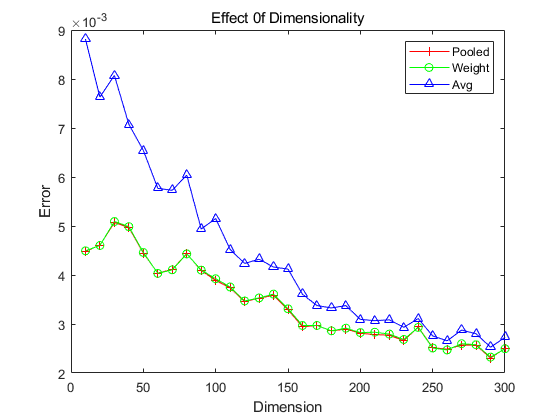}
		\end{minipage}
		\label{m1}
	}
	\subfigure[m=200]{
		\begin{minipage}[t]{0.3\linewidth}
			\includegraphics[scale=0.4]{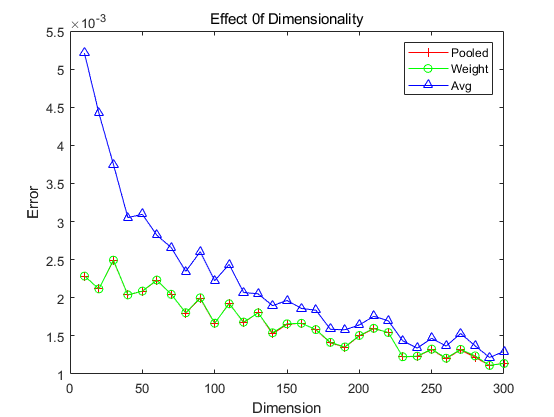}
		\end{minipage}
		\label{m2}
	}
	\subfigure[m=300]{
		\begin{minipage}[t]{0.3\linewidth}
			\includegraphics[scale=0.4]{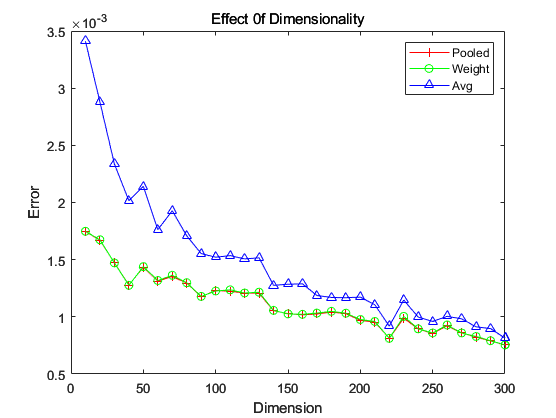}
		\end{minipage}
		\label{m3}
	}
	\caption{The effect of dimensionality on the estimation of largest spiked eigenvalues.}
	\label{key3}
\end{figure}

\begin{figure}
	\centering
	\subfigure[m=100]{
		\begin{minipage}[t]{0.3\linewidth}
			\includegraphics[scale=0.4]{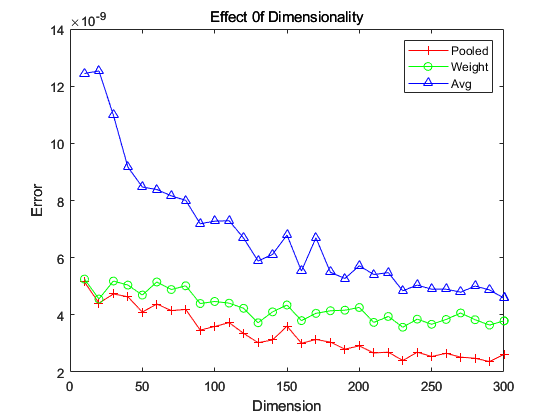}
		\end{minipage}
		\label{sm1}
	}
	\subfigure[m=200]{
		\begin{minipage}[t]{0.3\linewidth}
			\includegraphics[scale=0.4]{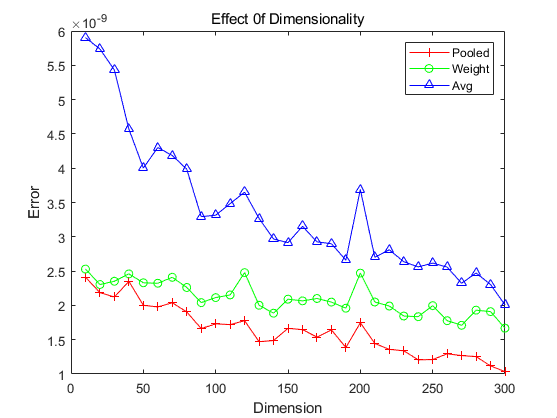}
		\end{minipage}
		\label{sm2}
	}
	\subfigure[m=300]{
		\begin{minipage}[t]{0.3\linewidth}
			\includegraphics[scale=0.4]{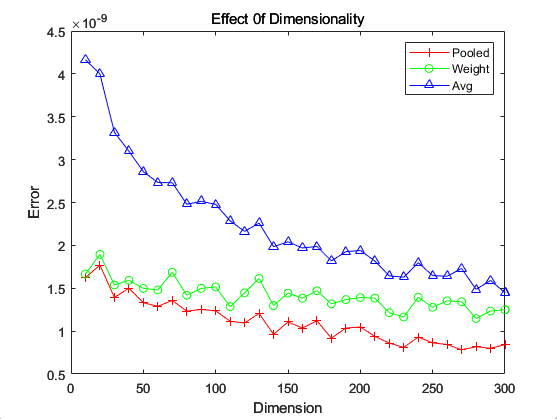}
		\end{minipage}
		\label{sm3}
	}
	\caption{The effect of dimensionality on the estimation of smallest spiked eigenvalues.}
	\label{key4}
\end{figure}

In this subsection, we will provide a more intuitive analysis of the impact of dimensionality on the error of the three estimators with a fixed number of machines (100, 200, and 300). While Figures \ref{key1} and \ref{key2} may not distinctly highlight the difference between the weighted and Avg-estimate, Figures \ref{key3} and \ref{key4} clearly demonstrate the sensitivity of these estimators to dimensionality. Notably, the weighted estimate exhibits greater stability and proximity to the pooled estimate compared to the average estimate. In summary, our weighted estimates, though with a marginal loss of precision, achieve higher transmission efficiency than the pooled estimates, while also being more accurate and stable than the average estimates.

\subsection{Selection of initial values}

In this subsection, we investigate the impact of initial value selection on the final error results. We consider three different initial values for the simulations. The first initial value fixes the estimate on the first server (effectively the central server), denoted as $\bar{\alpha}_k^1=\hat{\alpha}_{n_{1},j}^{(1)}$. The second initial value is chosen to be the estimate that deviates the furthest from the true value $\alpha_k$, i.e., $\bar{\alpha}_k^2=\max\left\{\left(\hat{\alpha}_{n_{\ell},j}^{(\ell)}-\alpha_k\right)^2, i=1,\dots,m.\right\}$. The third initial value is the average estimate, denoted as $\bar{\alpha}_k^3=\frac{1}{m}\sum{1}^{m}\hat{\alpha}_{n_{\ell},j}^{(\ell)}$.

\begin{table}[!ht]
	
	\centering
	
	\caption{The 1,000 mean squared errors in different dimensions and machine numbers, $\alpha_1=10$ (largest eigenvalues). }
	\label{tab3}
	
	\begin{tabular}{c|c|c|c|c|c|c|c|c|c}\hline
		
		\hline
		\diagbox{m}{p}&\multicolumn{3}{c|}{100} & \multicolumn{3}{c|}{200} & \multicolumn{3}{c}{300} \\ 
		\hline
		& \multicolumn{1}{c}{$\bar{\alpha}_k^1$}& \multicolumn{1}{c}{$\bar{\alpha}_k^2$}& \multicolumn{1}{c|}{$\bar{\alpha}_k^3$}& \multicolumn{1}{c}{$\bar{\alpha}_k^1$}& \multicolumn{1}{c}{$\bar{\alpha}_k^2$}& \multicolumn{1}{c|}{$\bar{\alpha}_k^3$}& \multicolumn{1}{c}{$\bar{\alpha}_k^1$}& \multicolumn{1}{c}{$\bar{\alpha}_k^2$}& \multicolumn{1}{c}{$\bar{\alpha}_k^3$}\\ \hline
		\multicolumn{1}{c|}{50}&\multicolumn{1}{c}{7.2768}&\multicolumn{1}{c}{7.2766}&\multicolumn{1}{c|}{7.2767}&\multicolumn{1}{c}{5.9128}&\multicolumn{1}{c}{5.9127}&\multicolumn{1}{c|}{5.9129}&\multicolumn{1}{c}{4.4352}&\multicolumn{1}{c}{4.435}&\multicolumn{1}{c}{4.4352}\\
		\multicolumn{1}{c|}{100}&\multicolumn{1}{c}{3.6246}&\multicolumn{1}{c}{3.6245}&\multicolumn{1}{c|}{3.6246}&\multicolumn{1}{c}{2.9486}&\multicolumn{1}{c}{2.9485}&\multicolumn{1}{c|}{2.9485}&\multicolumn{1}{c}{2.3561}&\multicolumn{1}{c}{2.3561}&\multicolumn{1}{c}{2.3561}\\
		\multicolumn{1}{c|}{150}&\multicolumn{1}{c}{2.3584}&\multicolumn{1}{c}{2.3583}&\multicolumn{1}{c|}{2.3584}&\multicolumn{1}{c}{1.9445}&\multicolumn{1}{c}{1.9445}&\multicolumn{1}{c|}{1.9445}&\multicolumn{1}{c}{1.6778}&\multicolumn{1}{c}{1.6779}&\multicolumn{1}{c}{1.6779}\\
		\multicolumn{1}{c|}{200}&\multicolumn{1}{c}{1.8892}&\multicolumn{1}{c}{1.8892}&\multicolumn{1}{c|}{1.8892}&\multicolumn{1}{c}{1.505}&\multicolumn{1}{c}{1.5049}&\multicolumn{1}{c|}{1.505}&\multicolumn{1}{c}{1.1821}&\multicolumn{1}{c}{1.1821}&\multicolumn{1}{c}{1.1821}\\
		\multicolumn{1}{c|}{250}&\multicolumn{1}{c}{1.4567}&\multicolumn{1}{c}{1.4567}&\multicolumn{1}{c|}{1.4568}&\multicolumn{1}{c}{1.1046}&\multicolumn{1}{c}{1.1047}&\multicolumn{1}{c|}{1.1046}&\multicolumn{1}{c}{0.8617}&\multicolumn{1}{c}{0.8617}&\multicolumn{1}{c}{0.8616}\\
		\multicolumn{1}{c|}{300}&\multicolumn{1}{c}{1.1598}&\multicolumn{1}{c}{1.1597}&\multicolumn{1}{c|}{1.1598}&\multicolumn{1}{c}{0.857}&\multicolumn{1}{c}{0.8569}&\multicolumn{1}{c|}{0.857}&\multicolumn{1}{c}{0.7858}&\multicolumn{1}{c}{0.7858}&\multicolumn{1}{c}{0.7858}\\
		\hline \hline 
	\end{tabular}	
	\begin{tablenotes}
		\footnotesize
		\centering
		\item Note: The error values in Table \ref{tab3} are multiplied by $10^{-3}$. 
	\end{tablenotes}

\end{table}

In the selection of initial values, we observe that as long as Assumption \ref{ass8} holds, the final error results tend to converge. This is evident in Table \ref{tab3}, where even when choosing the estimate farthest from the true value as the initial value, the overall statistical error remains consistent. In practical scenarios with a large number of machines, one can directly adopt the upper estimate from the central server as the initial value for weight calculation. For a more cautious approach, the mean estimate can be utilized as the initial value, incurring no additional communication cost, as it only involves a summation and averaging step on the central server. Algorithm \ref{al1} presented here employs the mean estimate as the initial value.

\subsection{Asymptotic normality.}

\begin{figure}
	\centering
	\subfigure[m=100, p=100]{
		\begin{minipage}[t]{0.45\linewidth}
			\includegraphics[scale=0.55]{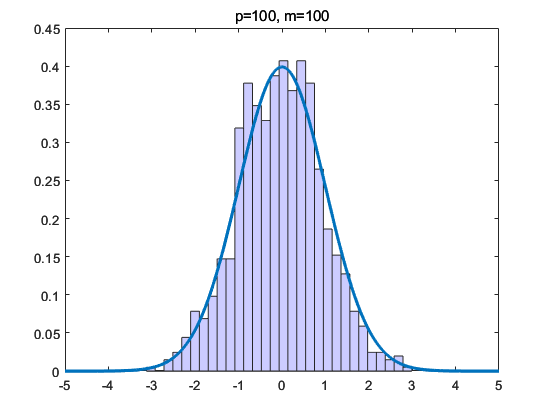}
		\end{minipage}
	}
	\subfigure[m=200, p=200]{
		\begin{minipage}[t]{0.45\linewidth}
			\includegraphics[scale=0.55]{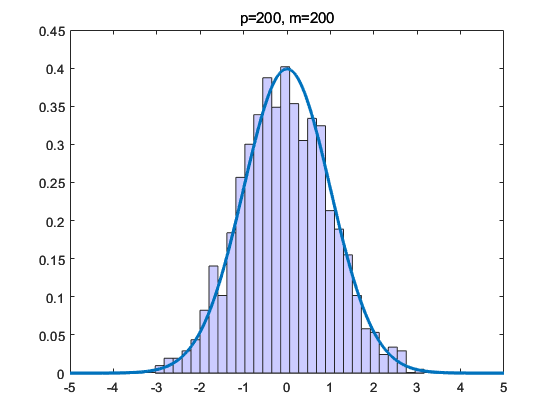}
		\end{minipage}
	}
	\caption{Plot of density function of largest spiked eigenvalues.}
	\label{key5}
\end{figure}

\begin{figure}[htbp]
	\centering
	\subfigure[m=100, p=100]{
		\begin{minipage}[t]{0.45\linewidth}
			\includegraphics[scale=0.55]{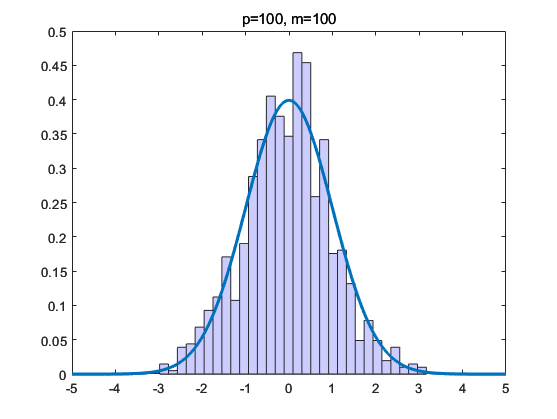}
		\end{minipage}
	}
	\subfigure[m=200, p=200]{
		\begin{minipage}[t]{0.45\linewidth}
			\includegraphics[scale=0.55]{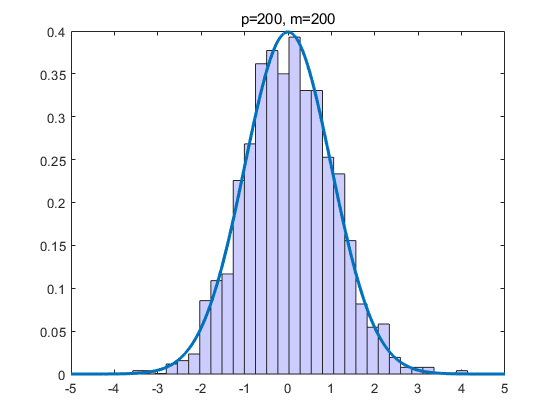}
		\end{minipage}
	}
	\caption{Plot of density function of smallest spiked eigenvalues.}
	\label{key6}
\end{figure}

This subsection presents a simulation study focusing on the Asymptotic normality. In Figure \ref{key5} and \ref{key6}, the blue curve depicts the standard normal distribution density. Two histograms were generated for the largest and smallest spiked eigenvalues, considering $m=100$, $p=100$, and $m=200$, $p=200$, respectively. The figures clearly suggest that our weighted estimates closely align with a normal distribution function. For brevity and to avoid redundancy, we omit the histograms for other parameter combinations, which exhibit similar behavior.

\subsection{Real data analysis}

In this subsection, we employ practical examples to elucidate the issue. The studies by \cite{cinar2019classification}, \cite{koklu2021classification}, \cite{cinar2021determination}, and \cite{cinar2022identification} provided us with the data sources, which can be accessed at \href{https://www.muratkoklu.com/datasets/}{https://www.muratkoklu.com/datasets/}. We acquired 75,000 data instances from their research, encompassing 106 features of rice. Our objective was to ascertain their maximum eigenvalues for classification purposes. We applied three methods to compute these maximum eigenvalues: the pooled method, the weighted method, and the average method. As depicted in Figure \ref{real}, the weighted method exhibits greater stability and proximity to the pooled estimate compared to the average method. Given our relatively modest dimensionality, the sample size on each machine tends to be small as the number of machines increases, while the total sample size and dimensionality remain constant. Notably, at a machine count of 20, our weighted estimate closely approximates the full sample estimate, while the mean estimate displays more variability. We posit that provided the sample size on each machine is sufficiently large, our weighted estimation can effectively demonstrate its superiority. Consequently, when working with large-scale data, utilizing our weighted estimator is the more favorable option.

\begin{figure}[htbp]
	\centering
	\includegraphics[scale=0.38]{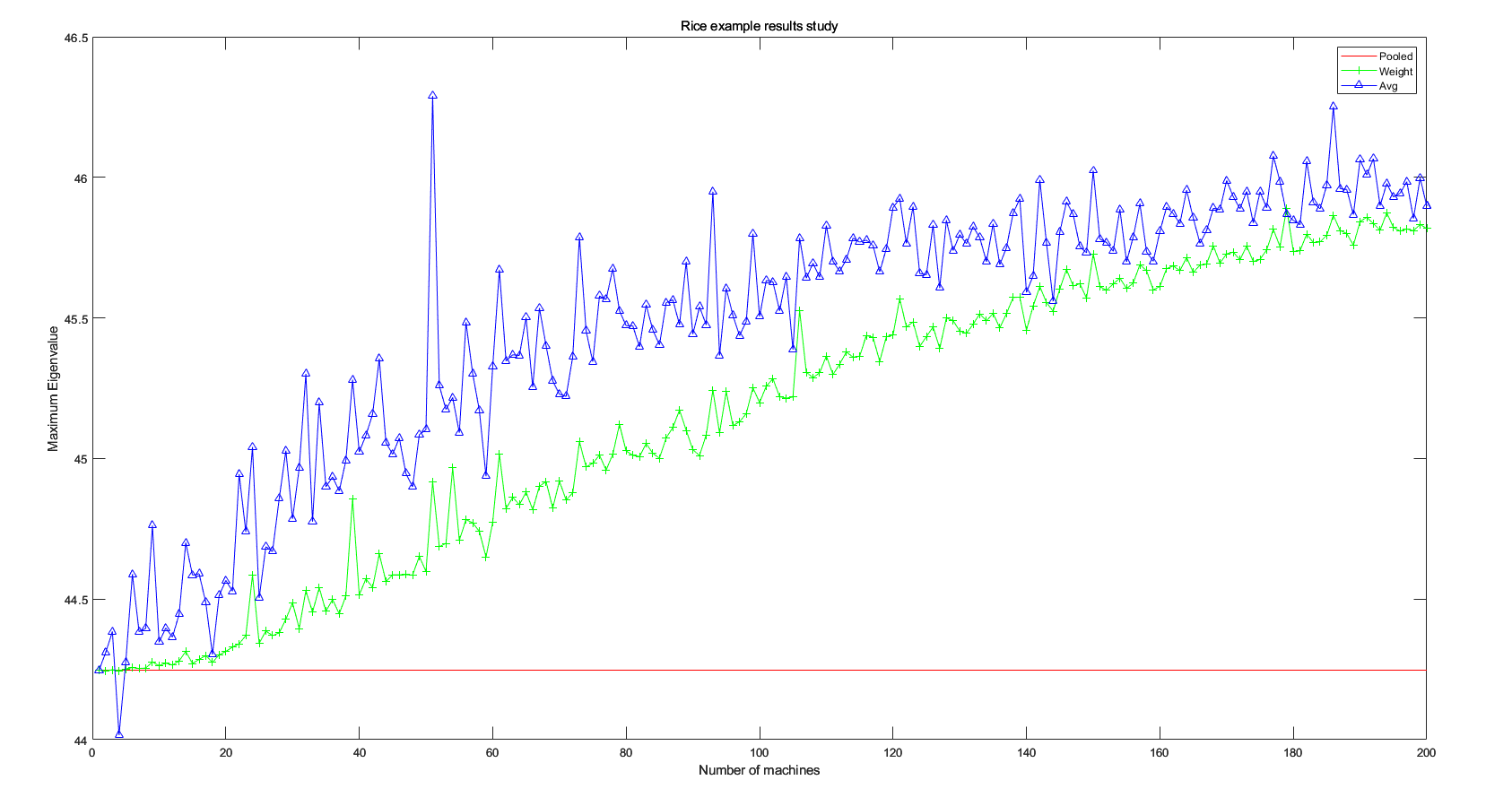}
	\caption{List of maximum eigenvalue calculations for 75,000 rice samples. }
	\label{real}
\end{figure}
\section{Discussion\label{sec:6}}

Our theoretical results rely on the assumption that the fourth-order moments of the samples are finite, thus accommodating data with heavier tails compared to the sub-Gaussian sample assumption. In addition, we specify that the samples are only i.i.d. on each machine, thus accounting for heterogeneity between machines. Another small advantage of our weighted estimator is that iteration is not required, thus greatly reducing the communication cost. In addition, the weighted estimator exhibits superior performance on large-scale data, with lower transmission overhead (compared to full-sample estimation), higher and more stable estimation accuracy (compared to average estimation), and significantly fewer restrictions on the number of machines.

Furthermore, the weighted estimator proposed in this paper exhibits certain limitations. As the number of machines approaches infinity and consequently, $p/n$ tends to 0, the full-sample estimator has the capability to estimate all non-zero and non-one eigenvalues. In contrast, the weighted estimator is restricted to estimating only those eigenvalues falling outside the interval $\left[1-\sqrt{y_{\ell}}, 1+\sqrt{y_{\ell}}\right]$, where $\ell=1, \dots,m$. This gives rise to a limitation, specifically that eigenvalues within the interval $\left[1-\mathop{\max}\limits_{\ell\in{1,\dots,m}}\left\{\sqrt{y_{\ell}}\right\}, 1+\mathop{\max}\limits_{\ell\in{1,\dots,m}}\left\{\sqrt{y_{\ell}}\right\}\right]$ of the removal point 1, cannot be estimated by the weighted estimator. However, it is important to note that in practical applications, the weighted estimator proves effective in resolving the majority of eigenvalue estimation challenges. Additionally, while our paper assumes knowledge of the number of spiked eigenvalues, this may not always be feasible in real-world scenarios. Therefore, the estimation of the number of spiked eigenvalues is an important consideration.

This marks our initial endeavor to bridge the theory of large-dimensional random matrices with machine learning algorithms. Despite one being theoretical and the other practical, we believe there is a need for them to converge. Although they represent two distinct modes of thinking, they converge towards a common objective: the more efficient analysis of high-dimensional, large-scale datasets. Both domains already house outstanding ideas and conclusions, and their strategic integration may give rise to even more brilliant insights.

\section*{Acknowledgments}

 Jiang Hu was partially supported by NSFC Grants No. 12171078, No. 12292980, No. 12292982,  and Fundamental Research Funds for the Central Universities No. 2412023YQ003.

\section*{Appendix}
In the appendix, we states the proofs of  Proposition \ref{pro1} and Theorems \ref{th1}--\ref{th4}, respectively.

\begin{proof}[Proof of Proposition \ref{pro1}]
	We need to get:
	$$\omega_{\ell}=\arg\min_{\omega_{\ell}^*}\mathbb{E}\left(\sum_{\ell=1}^{m}\omega_{\ell}^*\hat{\alpha}_{n_{\ell},j}^{({\ell})}-\alpha_k\right)^2, \quad {\ell}=1,\dots,m.$$	
	Using the method of Lagrange multipliers, under constraint $\sum_{{\ell}=1}^{m}\omega_{\ell}=1$, there are
	
	\begin{align*}
		L_n\left(\omega_1,\dots,\omega_m;\lambda\right)=&\mathbb{E}\left(\sum_{{\ell}=1}^{m}\omega_{\ell}\hat{\alpha}_{n_{\ell},j}^{({\ell})}-\alpha_k\right)^2+2\lambda\left(\sum_{{\ell}=1}^{m}\omega_{\ell}-1\right)\\
		=&\sum_{{\ell}=1}^{m}\omega_{\ell}^2\mathbb{E}\left(\hat{\alpha}_{n_{\ell},j}^{({\ell})}-\alpha_k\right)^2+2\lambda\left(\sum_{{\ell}=1}^{m}\omega_{\ell}-1\right).
	\end{align*}
	The function $L_n\left(\omega_1,\dots,\omega_m;\lambda\right)$ takes the partial derivatives for $\omega_{\ell}$, ${\ell}=1,\dots,m$, and $\lambda$, respectively:
	\begin{equation*}
		\begin{cases}
			 \frac{\partial L_n\left(\omega_1,\dots,\omega_m;\lambda\right)}{\partial \omega_1}&=2\omega_1\mathbb{E}\left(\hat{\alpha}_{n_1,j}^{(1)}-\alpha_k\right)^2+2\lambda=0 ,\\
			 &\vdots \\
			 \frac{\partial L_n\left(\omega_1,\dots,\omega_m;\lambda\right)}{\partial \omega_m}&=2\omega_m\mathbb{E}\left(\hat{\alpha}_{n_m,j}^{(m)}-\alpha_k\right)^2+2\lambda=0 ,\\
			\frac{\partial L_n\left(\omega_1,\dots,\omega_m;\lambda\right)}{\partial \lambda}&=\sum_{{\ell}=1}^{m}\omega_{\ell}-1=0. 
		\end{cases}
	\end{equation*}
	Writing $\mathbb{E}\left(\hat{\alpha}_{n_{\ell},j}^{({\ell})}-\alpha_k\right)^2$ as $\tilde{\sigma}_{\ell}^2$, 	according to Theorem \ref{th5}, \emph{Helly-Bray Theorem},  and the fact that   	$$\omega_{\ell}^*=\frac{1/\tilde{\sigma}_{\ell}^2}{\sum_{i=1}^{m}1/\tilde{\sigma}_{i}^2},\quad \ell=1,\dots,m, \quad \lambda=\frac{1}{\sum_{\ell=1}^{m}1/\tilde{\sigma}_{\ell}^2}, $$
we have	
	$$\tilde{\sigma}_{\ell}^2 \to \sigma_{\ell}^2/n_{\ell}=\left(\gamma_4^{(\ell)}-3\right)\frac{\alpha_k^2}{n_{\ell}}\sum_{t=1}^{p}{u^{(\ell)}}_{kt}^4+\frac{2\alpha_k^2\left(\alpha_k-1\right)^2}{n_\ell\left(\alpha_k-1\right)^2-p},\quad \text{as}\quad n_{\ell}\to\infty.$$
	Then by the \emph{Continuous Mapping Theorem}, we can get the asymptotically optimal weights as
	$$\omega_{\ell}=\frac{n_{\ell}/\sigma_{\ell}^2}{\sum_{i=1}^{m}n_{i}/\sigma_i^2}, \quad \ell=1, \dots, m.$$
	And the mean square error results as flowing,
	\begin{align*}
		&\mathbb{E}\left(\sum_{{\ell}=1}^{m}\omega_{\ell}\hat{\alpha}_{n_{\ell},j}^{({\ell})}-\alpha_k\right)^2=\sum_{\ell=1}^{m}\omega_{\ell}^2\mathbb{E}\left(\hat{\alpha}_{n_{\ell},j}^{(\ell)}-\alpha_k\right)^2\\
		=&\sum_{\ell=1}^{m}\left(\frac{1/\tilde{\sigma}_{\ell}^2}{\sum_{\ell=1}^{m}1/\tilde{\sigma}_{\ell}^2}\right)^2\tilde{\sigma}_{\ell}^2
		=\frac{1}{\sum_{\ell=1}^{m}1/\tilde{\sigma}_{\ell}^2}
		\to\frac{1}{\sum_{\ell=1}^{m}n_{\ell}/\sigma_{\ell}^2},\quad \text{as} \quad n_{\ell}\to\infty.
	\end{align*}	
	Then we complete the proof of Proposition \ref{pro1}.
\end{proof}

\begin{proof}[Proof of Theorem \ref{th1}]
	It is known that $\sigma_{\ell}^2$ is a continuous function with respect to $\alpha_k$, $\sum_{t=1}^{p}{u^{(\ell)}}_{kt}^4$ and $\gamma_4^{(\ell)}$, respectively. According to Lemma \ref{lem3}  and Eq. (2.35) in \cite{zhang2022asymptotic}, we know that $\widehat{\sum_{t=1}^{p}{u^{(\ell)}}_{kt}^4}$ and $\hat{\gamma}_4^{(\ell)}$ are consistently estimates of $\sum_{t=1}^{p}{u^{(\ell)}}_{kt}^4$ and $\gamma_4^{(\ell)}$, respectively. If Assumption \ref{ass6} holds, by virtue of the Continuous Mapping Theorem, the theorem is established. 
\end{proof}

\begin{proof}[Proof of Theorem \ref{th2}]
	Note that
	$$	\sum_{\ell=1}^{m}\hat{\omega}_{\ell}\hat{\alpha}_{n_{\ell},j}^{({\ell})}-\alpha_k=\sum_{{\ell}=1}^{m}\left(\hat{\omega}_{\ell}-\omega_{\ell}\right)\left(\hat{\alpha}_{n_{\ell},j}^{({\ell})}-\alpha_k\right)+\sum_{{\ell}=1}^{m}\omega_{\ell}\left(\hat{\alpha}_{n_{\ell},j}^{({\ell})}-\alpha_k\right).$$
	We have established that $\hat{\alpha}_{n_{\ell},j}^{({\ell})}\to\alpha_k$, almost surely. Furthermore, according to Theorem \ref{th1}, $\hat{\omega}_{\ell}\stackrel{\mathscr{P}}{\longrightarrow}{\omega}_{\ell}$, where $0<\omega_{\ell}<1$ is a non-random variable. Applying the \emph{Slutsky Theorems}, we deduce $$\sum_{\ell=1}^{m}\left(\hat{\omega}_{\ell}-\omega_{\ell}\right)\left(\hat{\alpha}_{n_{\ell},j}^{({\ell})}-\alpha_k\right)\stackrel{\mathscr{P}}{\longrightarrow}0,$$ 
	$$\sum_{{\ell}=1}^{m}\omega_{\ell}\left(\hat{\alpha}_{n_{\ell},j}^{({\ell})}-\alpha_k\right)\to 0, \quad \text{almost} \ \text{surely},$$
	thus concluding
    $\sum_{\ell=1}^{m}\hat{\omega}_{\ell}\hat{\alpha}_{n_{\ell},j}^{(\ell)}\stackrel{\mathscr{P}}{\longrightarrow}\alpha_k.$
\end{proof}

\begin{proof}[Proof of Theorem \ref{th3}]It follows that
	\begin{align*}
		&\sqrt{n}\left(\sum_{\ell=1}^{m}\hat{\omega}_{\ell} \hat{\alpha}_{n_{\ell},j}^{(\ell)}-\alpha_k\right)\\
		=&\sqrt{n}\left(\sum_{\ell=1}^{m}\left(\hat{\omega}_{\ell} -\omega_{\ell}\right)\left(\hat{\alpha}_{n_{\ell},j}^{(\ell)}-\alpha_k\right)\right)+\sqrt{n}\sum_{\ell=1}^{m}\omega_{\ell} \left(\hat{\alpha}_{n_{\ell},j}^{(\ell)}-\alpha_k\right)\\
		=&\sum_{\ell=1}^{m}\left(\hat{\omega}_{\ell} -\omega_{\ell}\right)\sqrt{\frac{n}{n_{\ell}}} \sqrt{n_{\ell}}\left(\hat{\alpha}_{n_{\ell},j}^{(\ell)}-\alpha_k\right)+\sum_{\ell=1}^{m}\omega_{\ell} \sqrt{\frac{n}{n_{\ell}}} \sqrt{n_{\ell}}\left(\hat{\alpha}_{n_{\ell},j}^{(\ell)}-\alpha_k\right)\\
		\doteq & \bm{I}+\bm{II}.
			\end{align*}	
	Let $Z_{n_{\ell}}=\sqrt{n_{\ell}}\left(\hat{\alpha}_{n_{\ell},j}^{(\ell)}-\alpha_k\right)$, $\ell=1, \dots, m$. By Theorem \ref{th5}, on each machine we have
	$$Z_{n_{\ell}}=\sqrt{n_{\ell}}\left(\hat{\alpha}_{n_{\ell},j}^{(\ell)}-\alpha_k\right)\stackrel{\mathscr{F}}{\longrightarrow}\mathcal{N}\left(0, \sigma_{\ell}^2\right),$$ 
Because $\hat{\omega}_{\ell}\stackrel{\mathscr{P}}{\longrightarrow}\omega_{\ell}$, $i=1, \dots, m$,
	by the Slutsky theorem, we have that 
	$$\bm{I}=\sum_{\ell=1}^{m}\left(\hat{\omega}_{\ell} -\omega_{\ell}\right)\sqrt{\frac{n}{n_{\ell}}} Z_{n_{\ell}}\stackrel{\mathscr{F}}{\longrightarrow}0$$
	and
	$$\bm{II}=\sum_{\ell=1}^{m}\omega_{\ell} \sqrt{\frac{n}{n_{\ell}}} Z_{n_{\ell}}\stackrel{\mathscr{F}}{\longrightarrow}\mathcal{N}\left(0, \sum_{\ell=1}^{m}\omega_{\ell}^2\frac{n}{n_{\ell}}\sigma_{\ell}^2\right).$$
	Thus, we can conclude that	$$\sqrt{n}\left(\sum_{\ell=1}^{m}\hat{\omega}_{\ell}\hat{\alpha}_{n_{\ell},j}^{(\ell)}-\alpha_k\right)=\bm{I}+\bm{II}\stackrel{\mathscr{F}}{\longrightarrow} \mathcal{N}\left(0, \frac{n }{\sum_{\ell=1}^{m}n_{\ell}/\sigma_{\ell}^2}\right).$$
\end{proof}

\begin{proof}[Proof of Theorem \ref{th4}]
	By Theorem \ref{th3}, it is known that
	$$\sqrt{n}\left(\sum_{\ell=1}^{m}\hat{\omega}_{\ell}\hat{\alpha}_{n_{\ell},j}^{(\ell)}-\alpha_k\right)\stackrel{\mathscr{F}}{\longrightarrow} \mathcal{N}\left(0, \frac{n }{\sum_{\ell=1}^{m}n_{\ell}/\sigma_{\ell}^2}\right).$$
	For convenience, we define the random variable $Y_n=\sqrt{n}\left(\sum_{\ell=1}^{m}\hat{\omega}_{\ell}\hat{\alpha}_{n_{\ell},j}^{(\ell)}-\alpha_k\right)$ and $Y\sim \mathcal{N}\left(0, \frac{n }{\sum_{\ell=1}^{m}n_{\ell}/\sigma_{\ell}^2}\right)$, so $Y_n\stackrel{\mathscr{F}}{\longrightarrow} Y $.
	
	As $Y$ is a Gaussian distributed random variable, and it is easy to verify that $Y$ is a sub-Gaussian random variable and $\Vert Y\Vert_{\psi_2} \le C\frac{\sqrt{n} }{\sqrt{\sum_{\ell=1}^{m}n_{\ell}/\sigma_{\ell}^2}}$, then we have that
	\begin{align*}
		\Vert\sum_{\ell=1}^{m}\hat{\omega}_{\ell}\hat{\alpha}_{n_{\ell},j}^{(\ell)}-\alpha_k\Vert_{\psi_2}=&\sup_{r\ge1}r^{-1/2}\left(\mathbb{E}\vert \sum_{\ell=1}^{m}\hat{\omega}_{\ell}\hat{\alpha}_{n_{\ell},j}^{(\ell)}-\alpha_k \vert^r\right)^{1/r}\\
		=&\frac{1}{\sqrt{n}}\sup_{r\ge1}r^{-1/2}\left(\mathbb{E}\vert \sqrt{n}\left(\sum_{\ell=1}^{m}\hat{\omega}_{\ell}\hat{\alpha}_{n_{\ell},j}^{(\ell)}-\alpha_k\right) \vert^r\right)^{1/r}\\
		=&\frac{1}{\sqrt{n}}\sup_{r\ge1}r^{-1/2}\left(\mathbb{E}\vert Y_n \vert^r\right)^{1/r}.	\end{align*}
	According to Helly-Bray theorem, $\mathbb{E}\vert Y_n\vert^r\to\mathbb{E}\vert Y\vert^r $, as $n \to\infty$. Then we can easily get $\left(\mathbb{E}\vert Y_n\vert^r\right)^{\frac{1}{r}}/\sqrt{rn}\to\left(\mathbb{E}\vert Y\vert^r\right)^{\frac{1}{r}}/\sqrt{rn} $ for fixed $r$. 

		If the sequence 
		$\lim_{n\to\infty}b_n=b$. For $\forall \epsilon>0$, $\exists N$, so that, when $n>N$, has $\vert b_n-b\vert<\epsilon$. In particular, we take $\epsilon_0=1$, then
		$$\vert b_n\vert\le\vert b_n-b\vert+\vert b\vert<\vert b\vert+1,  \quad\forall n>N. $$

	In the same way, $\left(\mathbb{E}\vert Y_n\vert^r\right)^{\frac{1}{r}}/\sqrt{rn}$ is also a convergent sequence, then when $n\to \infty$, 
	\begin{align*}
		&\Vert\sum_{\ell=1}^{m}\hat{\omega}_{\ell}\hat{\alpha}_{n_{\ell},j}^{(\ell)}-\alpha_k\Vert_{\psi_2}=\sup_{r\ge1}r^{-1/2}\left(\mathbb{E}\vert \sum_{\ell=1}^{m}\hat{\omega}_{\ell}\hat{\alpha}_{n_{\ell},j}^{(\ell)}-\alpha_k \vert^r\right)^{1/r}\\
		<&\frac{1}{\sqrt{n}}\sup_{r\ge1}r^{-1/2}\left(\mathbb{E}\vert Y\vert^r\right)^{1/r} +O\left(\frac{1}{\sqrt{n}}\right)
		\le \frac{C}{\sqrt{\sum_{\ell=1}^{m}n_{\ell}/\sigma_{\ell}^2}}.
	\end{align*}
	Then we complete the proof of Theorem \ref{th4}.
\end{proof}

\bibliographystyle{myjmva}
\bibliography{ref}


\end{document}